\documentclass{amsart}%
\usepackage{amsmath}
\usepackage{amsfonts}
\usepackage{amssymb}
\usepackage{graphicx}%
\providecommand{\U}[1]{\protect\rule{.1in}{.1in}}
\newtheorem{theorem}{Theorem}
\theoremstyle{plain}

\newtheorem{corollary}{Corollary}

\newtheorem{example}{Example}

\newtheorem{lemma}{Lemma}

\newtheorem{proposition}{Proposition}
\newtheorem{remark}{Remark}

\numberwithin{equation}{section}
\begin{document}
\title[Multivariate Normal]{ The conditional expectation of the product of the first $n-1$ Hermite
polynomials in a multivariate normal distribution with respect to the $n$-th
variable. A fresh perspective on the Kibble-Slepian formula.}
\author{Pawe\l \ J. Szab\l owski}
\address{Department of Mathematics and Information Sciences, \\
Warsaw University of Technology\\
ul Koszykowa 75, 00-662 Warsaw, Poland }
\email{pawel.szablowski@gmail.com}
\thanks{}
\date{May 2026}
\subjclass[2020]{Primary 33C45, 62H05, ; Secondary 60E07, 60E99}
\keywords{Normal random vector, conditional moments, Hermite polynomials}

\begin{abstract}
We calculate the conditional expectation of $\prod_{j=1}^{n-1}H_{k_{j}}%
(X_{j})$ given $X_{n}=z,$ if random vector $(X_{1},\ldots,X_{n})^{T}$ has
multivariate normal distribution and $H_{n}(x) $ denotes $n$-th Hermite
polynomial. This expectation is a polynomial in $z$ of order $\sum_{j=1}%
^{n-1}k_{j}$. Our formula has an iterative form with respect to $n$. We also
present some auxiliary observations concerning the expansion of the density of
the $n$-dimensional normal distribution in the series of the Hermite
polynomials. Mostly concerning the properties of the coefficients of this
expansion. To perform these calculations, we give a few auxiliary formulas
concerning Hermite polynomials and multivariate normal distributions. We apply
this result to obtain exact, simple forms of these expansions for $n=2$ and
$3$, thus looking at known results from a different perspective.

\end{abstract}
\maketitle

\section{Motivation Preliminaries and Introduction}

The Kibble-Slepian expansion (briefly K-S expansion) formula, deals with
expanding the density of multivariate normal distribution in the infinite
series of Hermite polynomials. We will denote these polynomials by $\left\{
H_{n}(x)\right\}  _{n\geq0}$. In the original papers of first Kibble
\cite{Kibble} and later Slepian \cite{Slepian72}, the coefficients of this
expansion are derived directly from the variance-covariance matrix, more
precisely the formula stems from the properties of the characteristic function
of the multivariate normal distribution. Apart from the original papers of
Kibble and Slepian, there were also other approaches to justify this formula
based on slightly different perspectives. Namely, \cite{Foata81} and
\cite{Louck81} exploit mostly symmetries and base on certain combinatorial
properties of variance-covariance matrices. There were also attempts to
generalize the K-S formula by replacing Hermite polynomials with the so-called
$q-$Hermite ones \cite{SzablKib} and an attempt to move from $1 $ to $2$
dimensions \cite{Ism16}. There also exists a generalization of the normal
distribution, with Hermite polynomials replaced by $q-$Hermite polynomials,
stemming from the theory of Markov chains. It was done in \cite{Szab10}. This
Markov case with $q=1$ will be included for completeness and will be explained
in the sequel. This case leads to a very particular form of
variance-covariance matrix (see subsection \ref{Mark}).

Our approach is much simpler and more general. Namely, we use the fact that
the density of a multivariate normal distribution can be expanded in an
infinite series of products of Hermite polynomials. There appear Hermite
polynomials since they are orthogonal with respect to the marginal densities
(which are obviously normal). We start with three general observations
concerning the multivariate normal distribution. Namely, the property that all
conditional moments of order $n$ are polynomials of order $n$ in the
conditioning of random variables, symmetry expressed in the fact that moments
of odd order are equal to zero (Isserlis theorem) and certain particular
properties of Hermite polynomials. Below we will show that the coefficients of
the expansions are equal to the expectations of the products of Hermite
polynomials of different orders. More precisely under the assumption that the
random vector $(X_{1},\ldots,X_{n})$ has multivariate normal distribution on
the values of the quantities $E\prod_{j=1}^{n}H_{k_{j}}(X_{j})$. It turns out
that these quantities are equal to zero for many combinations of the indices
$k_{1},\ldots,k_{n}.$ That is why the change of the way of summation is
possible. This, on the other hand, leads to the K-S formula. We will explain
it in detail in the sequel. To find values of coefficients $E\prod_{j=1}%
^{n}H_{k_{j}}(X_{j})$ is rather a difficult task. We will find them
recursively, indirectly by integrating the conditional expectations $E\left(
\prod_{j=1}^{n-1}H_{k_{j}}(X_{j})|X_{n}=z\right)  .$ It will turn out that
these functions are linear combinations of polynomials $\prod_{j=1}%
^{n-1}H_{s_{j}}(z)$ satisfying condition $\sum_{j=1}^{n-1}s_{j}\leq\sum
_{j=1}^{n-1}k_{j}.$ In this way we make a few new interesting statements about
multivariate normal distribution. First one is that we find the formula for
the function $E\left(  \prod_{j=1}^{n-1}H_{k_{j}}(X_{j})|X_{n}=z\right)  $,
the second one is the fact that this function depends on polynomials
$\prod_{j=1}^{n-1}H_{s_{j}}(z) $ and the third one is that the coefficients
$E\prod_{j=1}^{n}H_{k_{j}}(X_{j})$, as being equal to integrals of
\[
\int E\left(  \prod_{j=1}^{n-1}H_{k_{j}}(X_{j})|X_{n}=z\right)  H_{k_{n}%
}(z)f_{N}(z)dz,
\]
with $f_{N}(z)$ denoting normal $N(0,1)$ distribution, depend heavily on the
integer functions defined by the integrals%
\[
\int\prod_{j=0}^{n}H_{k_{j}}(z)f_{N}(z)dz.
\]
Despite its simple form little is known about this function especially for
$n>3$. We will find its generating function.

The conditional expectations
\[
E\left(  \prod_{j=1}^{m}H_{k_{j}}(X_{j})|X_{m+1},\ldots,X_{n}\right)  ,
\]
are of interest by themselves. For most $k^{\prime}s$ these polynomials are
unknown. As stated above we find them for $m\allowbreak=\allowbreak n-1.$

For a specific form of variance-covariance matrix of the multivariate
distribution leading to Markov chains, we adapt results known for the
$q-$Hermite polynomials (presented in \cite{SzablAW}) to the normal case
(i.e., for $q\allowbreak=\allowbreak1)$ and find these conditional
expectations also for $m\allowbreak=\allowbreak1.$

As stated above these polynomials are helpful in finding quantities
$E\prod_{j=1}^{n}H_{k_{j}}(X_{j})$ that are crucial in finding multivariate
expansion of the density of multivariate normal distribution in the product of
marginal $1$-dimensional distribution times an infinite series in product of
Hermite polynomials $\prod_{j=1}^{n}H_{k_{j}}(x_{j})$. This expansion is the
K-S expansion. In this way, we examine the K-S formula from different
perspectives and provide a simple, elementary proof of this expansion. We
illustrate the developed theory by rigorously analysing the cases $n=2,3$, and
partially $4$.

The paper is organized as follows. In the rest of this section, more precisely
in Subsection \ref{AUX}, we recall well-known facts about multivariate normal
distributions, mostly concerning finding the marginal distributions and
distributions of affine transformations. We also recall well-known facts about
Hermite polynomials and prove some new ones. The next Section\ref{GmN} is
devoted to some general observations concerning multivariate normal
distribution and presentation of our main results. In particular, we present a
system of linear equations that are satisfied by the conditional moments
$E\left(  \prod_{j=1}^{n-1}H_{k_{j}}(X_{j})|X_{n}=z\right)  $ and we solve
these equations. The solution for some $n$ depends on the values $E\prod
_{j=1}^{m}H_{k_{j}}(X_{j})$ for $m<n.$ Thus, the system of equations can be
solved iteratively.

The last Section \ref{23} is a presentation of elementary and short proofs of
the so-called Mehler formula and K-S formula for $n\allowbreak=\allowbreak3$
and present some remarks concerning the case $n\allowbreak=\allowbreak4$ as
illustration of the theory developed in the paper.

\subsection{Auxiliary results\label{AUX}}

Let us recall that the standard normal distribution has a density equal to%
\[
f_{N}(x)=\frac{1}{\sqrt{2\pi}}\exp\left(  -x^{2}/2\right)  .
\]
The normal distribution $N(m,\sigma)$ with expectation $m$ and standard
deviation $\sigma$ has the following density%
\[
f_{G}(x|m,\sigma^{2})=\frac{1}{\sigma\sqrt{2\pi}}\exp\left(  -\frac{(x-m)^{2}%
}{2\sigma^{2}}\right)  .
\]
Thus, we have $f_{G}(x|m,\sigma^{2})\allowbreak=\allowbreak f_{N}(\frac
{x-m}{\sigma})/\sigma$. Let us also denote
\begin{equation}
f_{CN}(x|y,\rho)\allowbreak=\allowbreak f_{G}(x|\rho y,1-\rho^{2})=\frac
{1}{\sqrt{2\pi(1-\rho^{2})}}\exp\left(  -\frac{(x-\rho y)^{2}}{2(1-\rho^{2}%
)}\right)  \label{cn}%
\end{equation}
and let us call this density conditional normal.

$i$ will always denote imaginary unit, i.e., $i^{2}=-1.$

We will use the usual matrix-vector notation, i.e., all vectors will be
understood to be columns; the upper index $T$ will denote transposition. An
$n\times n$ matrix with $j,k$ entry equal $\sigma_{j,k}$ we will denote
$\left[  \sigma_{j,k}\right]  $ or $\left[  \sigma_{j,k}\right]
_{j,k=1,\ldots n}.$ If we add assumption of symmetry then symmetric $\left[
\sigma_{jk}\right]  $ means naturally that $\sigma_{j,k}\allowbreak
=\allowbreak\sigma_{k,,j}$.

Let us denote by $N\left(  \mathbf{m,\Sigma}\right)  $ the multivariate normal
distribution, i.e., the distribution of the, say, $n-$dimensional random
vector $\mathbf{X}$ with vector of expectations $\mathbf{m\allowbreak
=\allowbreak EX,}$ and variance-covariance matrix $\mathbf{\Sigma
\allowbreak=\allowbreak E}\left(  \mathbf{X}-\mathbf{m}\right)  \left(
\mathbf{X}-\mathbf{m}\right)  ^{T}$. To avoid unnecessary complications we
will always assume that the matrix $\mathbf{\Sigma}$ is positive definite.
Then the distribution $N\left(  \mathbf{m,\Sigma}\right)  $ has the following
density%
\[
f_{mN}(\mathbf{x|m,\Sigma)=}\frac{1}{\sqrt{(2\pi)^{n}\det\mathbf{\Sigma}}}%
\exp\left(  -\left(  \mathbf{x-m}\right)  ^{T}\mathbf{\Sigma}^{-1}\left(
\mathbf{x-m}\right)  /2\right)  ,
\]
where $\mathbf{x}$ is a column with coordinates $x_{1},\ldots,x_{n},$
superscript $T$ denotes transposition. Moreover, we know also, that if an
$n-$dimensional random vector $\mathbf{X}$ has normal distribution
$N(\mathbf{\mu},\mathbf{\Sigma})$ and is split into two blocks $%
\genfrac{[}{]}{0pt}{}{\mathbf{X}_{1}}{\mathbf{X}_{2}}%
$ of dimensions $k$ and $n-k$ and consequently vector $\mathbf{\mu}$ and
matrix $\mathbf{\Sigma}$ are split respectively so that $\mathbf{\mu
\allowbreak=\allowbreak}%
\genfrac{[}{]}{0pt}{}{\mathbf{\mu}_{1}}{\mathbf{\mu}_{2}}%
,$ $\mathbf{\Sigma\allowbreak=\allowbreak}\left[
\begin{array}
[c]{cc}%
\mathbf{\Sigma}_{11} & \mathbf{\Sigma}_{12}\\
\mathbf{\Sigma}_{21} & \mathbf{\Sigma}_{22}%
\end{array}
\right]  $ , with $\mathbf{\Sigma}_{12}\allowbreak=\allowbreak\mathbf{\Sigma
}_{21}^{T},$ of appropriate dimensions, then the marginal distribution of
$\mathbf{X}_{2}$ is normal $N\left(  \mathbf{m}_{2}\mathbf{,\Sigma}%
_{2}\right)  $ and the conditional distribution $\mathbf{X}_{1}|\mathbf{X}%
_{2}=\mathbf{x}_{2}$ is also normal
\begin{equation}
N\left(  \mathbf{\mu}_{1}+\mathbf{\Sigma}_{12}\mathbf{\Sigma}_{22}%
^{-1}(\mathbf{x}_{2}-\mathbf{\mu}_{2}),\mathbf{\Sigma}_{11}-\mathbf{\Sigma
}_{12}\mathbf{\Sigma}_{22}^{-1}\mathbf{\Sigma}_{21}\right)  . \label{margin}%
\end{equation}
Moreover, is is also known that if $\mathbf{Y\allowbreak=\allowbreak}$
$\mathbf{AX+b,}$ where $\mathbf{A}$ is a $m\times n$ matrix of full rank
$m<n,$ then%
\begin{equation}
\mathbf{Y\sim N(Am+b,A\Sigma A}^{T}). \label{afine}%
\end{equation}

We have an immediate corollary to these observations we have

\begin{proposition}
Let $n-$dimensional random vector be normal with $N\left(  \mathbf{0}%
,\mathbf{\Sigma}\right)  $ where the positive definite matrix $\mathbf{\Sigma
}\allowbreak=\allowbreak\left[  \rho_{j,k}\right]  ,$ with $\rho
_{j,j}\allowbreak=\allowbreak1,$ $j,k\allowbreak=\allowbreak1,\ldots,n$. Then,
the conditional distribution of the random vector $\mathbf{X}_{1}%
\allowbreak=\allowbreak(X_{1},\ldots,X_{n-1})^{T}$ given $X_{n}=z$ is normal
$N\left(  \mathbf{\rho}z,\mathbf{\Sigma}_{1}\right)  ,$ where the
$n-1-$dimensional vector $\mathbf{\rho}$ has entries $\rho_{k,n},$
$k\allowbreak=\allowbreak1,\ldots,n-1$ .and matrix $\mathbf{\Sigma}%
_{1}\allowbreak=\allowbreak\left[  \rho_{k,j}-\rho_{k,n}\rho_{j,n}\right]  $,
$k,j\allowbreak=\allowbreak1,\ldots,n-1$.
\end{proposition}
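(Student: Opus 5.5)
The plan is to specialize the general block-conditioning formula (\ref{margin}) to the partition $\mathbf{X}_{1}=(X_{1},\ldots,X_{n-1})^{T}$ (dimension $k=n-1$) and $\mathbf{X}_{2}=X_{n}$ (dimension $1$). Since the mean vector is $\mathbf{0}$, both $\mathbf{\mu}_{1}$ and $\mathbf{\mu}_{2}$ vanish. It then remains only to identify the four blocks of $\mathbf{\Sigma}=[\rho_{j,k}]$ and evaluate the two expressions in (\ref{margin}).

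First I will read off the blocks. The lower-right block is the scalar $\mathbf{\Sigma}_{22}=\rho_{n,n}=1$, so $\mathbf{\Sigma}_{22}^{-1}=1$. It is invertible; more generally, positive definiteness of $\mathbf{\Sigma}$ guarantees invertibility of every principal block. The off-diagonal block $\mathbf{\Sigma}_{12}$ is the $(n-1)\times1$ column with entries $\rho_{k,n}$, $k=1,\ldots,n-1$, which is exactly $\mathbf{\rho}$. By symmetry, $\mathbf{\Sigma}_{21}=\mathbf{\rho}^{T}$. The upper-left block is $\mathbf{\Sigma}_{11}=[\rho_{k,j}]_{k,j=1,\ldots,n-1}$.

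Next I will substitute these into (\ref{margin}). The conditional mean becomes $\mathbf{0}+\mathbf{\rho}\cdot1\cdot(z-0)=\mathbf{\rho}z$. The conditional covariance becomes $\mathbf{\Sigma}_{11}-\mathbf{\rho}\mathbf{\rho}^{T}$, and the $(k,j)$ entry of the outer product $\mathbf{\rho}\mathbf{\rho}^{T}$ is $\rho_{k,n}\rho_{j,n}$. This gives $\mathbf{\Sigma}_{1}=[\rho_{k,j}-\rho_{k,n}\rho_{j,n}]$, as claimed.

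There is no real obstacle here. The only point worth remarking on is that $\mathbf{\Sigma}_{1}$ is again positive definite, so that the conditional distribution is a genuine nondegenerate normal law. This follows because $\mathbf{\Sigma}_{1}$ is the Schur complement of $\mathbf{\Sigma}_{22}$ in the positive definite matrix $\mathbf{\Sigma}$. Equivalently, $\det\mathbf{\Sigma}=\det\mathbf{\Sigma}_{22}\det\mathbf{\Sigma}_{1}$, and the quadratic form of $\mathbf{\Sigma}$ restricted to suitable vectors reduces to that of $\mathbf{\Sigma}_{1}$. I will state this explicitly for completeness.
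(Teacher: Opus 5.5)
Your proposal is correct and is exactly the paper's argument: the paper simply invokes (\ref{margin}) with $\mathbf{\Sigma}_{11}=[\rho_{k,j}]_{k,j=1,\ldots,n-1}$, $\mathbf{\Sigma}_{22}=1$, and the cross-covariance block equal to $\mathbf{\rho}$. Your additional remark that $\mathbf{\Sigma}_{1}$ is positive definite, being a Schur complement of a positive definite matrix, is correct; the paper leaves it implicit.
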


\begin{proof}
Follows immediately (\ref{margin}) applied with $\mathbf{\Sigma}%
_{11}\allowbreak=\allowbreak\lbrack\rho_{k,j}]_{k,j=1,\ldots,n-1}%
,\mathbf{\Sigma}_{21}\allowbreak=\allowbreak\mathbf{\rho.}$
\end{proof}

Recall also that the family of monic polynomials $\left\{  H_{n}(x)\right\}
_{n\geq-1}$ (i.e., having $1$ as the coefficient by the monomial of the
highest order) that are orthogonal with respect to the density $f_{N}$ are
defined by the following three-term recurrence
\[
H_{n+1}(x)=xH_{n}(x)-nH_{n-1}(x),
\]
with $H_{-1}(x)\allowbreak=\allowbreak0$ and $H_{0}(x)\allowbreak
=\allowbreak1.$ They are called probabilistic Hermite polynomials. Let us
collect several well-known facts concerning these polynomials. We have

\begin{lemma}
\label{Her}a)
\begin{equation}
\frac{1}{\sqrt{2\pi}}\int_{-\infty}^{\infty}H_{n}(x)H_{m}(x)e^{-x^{2}/2}dx=%
\begin{cases}
0\text{,} & \text{if }n\neq m\text{;}\\
n!\text{,} & \text{if .}n=m
\end{cases}
. \label{ortH}%
\end{equation}

b) The density of the bivariate normal distribution $N\left(  \left[
\begin{array}
[c]{c}%
0\\
0
\end{array}
\right]  ,\left[
\begin{array}
[c]{cc}%
1 & \rho\\
\rho & 1
\end{array}
\right]  \right)  $, i.e., function
\[
f_{2N}(x,y|\rho)=\frac{1}{2\pi\sqrt{(1-\rho^{2})}}\exp\left(  -\frac
{x^{2}-2\rho xy+y^{2}}{2(1-\rho^{2})}\right)  ,
\]
can be expanded in the following series (so-called Mehler expansion)%
\begin{equation}
f_{2N}(x,y|\rho)=f_{N}(x)f_{N}(y)\sum_{n\geq0}\frac{\rho^{n}}{n!}H_{n}%
(x)H_{n}(y), \label{Meh}%
\end{equation}
convergent for all $x,y\in\mathbb{R}$ and $\left\vert \rho\right\vert <1.$

c) For all non-negative integers $m,k$ and $\left\vert \rho\right\vert <1$ ,
$x,y\in\mathbb{R}$
\begin{equation}
\sum_{j\geq0}\frac{\rho^{j}}{j!}H_{j+m}(x)H_{j+k}(y)=Q_{m,k}(x,y|\rho
)\sum_{j\geq0}\frac{\rho^{j}}{j!}H_{j}(x)H_{j}(y), \label{Carlitz}%
\end{equation}
where
\begin{align}
Q_{m,k}(x,y|\rho)  &  =\sum_{s=0}^{k}\binom{k}{s}(-\rho)^{s}H_{k-s}%
(y)\frac{H_{m+s}\left(  \frac{x-\rho y}{\sqrt{1-\rho^{2}}}\right)  }{\left(
1-\rho^{2}\right)  ^{\frac{m+s}{2}}}.\label{Car2}\\
& \nonumber
\end{align}

d) We have also
\begin{align*}
Q_{m,k}(x,y|\rho)  &  =Q_{k,m}(y,x|\rho),\\
Q_{m,0}(x,y,\rho)  &  =H_{m}\left(  \frac{x-\rho y}{\sqrt{1-\rho^{2}}}\right)
/\left(  1-\rho^{2}\right)  ^{m/2},
\end{align*}

e)
\begin{align}
x^{n}  &  =\sum_{k=0}^{\left\lfloor n/2\right\rfloor }\frac{n!}{2^{k}%
k!(n-2k)!}H_{n-2k}(x),\label{xnaH}\\
H_{n}(x)  &  =\sum_{j=0}^{\left\lfloor n/2\right\rfloor }\frac{(-1)^{j}%
n!}{2^{j}j!(n-2j)!}x^{n-2j}. \label{Hnax}%
\end{align}
We will use in fact the following modification of this formula
\[
x^{n}=a^{n}\left(  \frac{x}{a}\right)  ^{n}=a^{n}\sum_{k=0}^{\left\lfloor
n/2\right\rfloor }\frac{n!(a^{2})^{k}}{2^{k}k!(n-2k)!}a^{n-2k}H_{n-2k}(x/a).
\]
f) We will also use the following formula (so-called umbral identity)%
\begin{align}
&  (a^{2}+b^{2})^{n/2}H_{n}\left(  \frac{ay+bz}{\sqrt{a^{2}+b^{2}}}\right)
\label{H(sum)}\\
&  =\sum_{k=0}^{n}\binom{n}{k}a^{k}H_{k}(y)b^{n-k}H_{n-k}(z).\nonumber
\end{align}

g) Now, to proceed further we will use the following linearization formula%
\begin{equation}
H_{n}(x)H_{m}(x)=\sum_{j=0}^{\min(n,m)}\binom{n}{j}\binom{m}{j}j!H_{n+m-2j}%
\left(  x\right)  . \label{Lin}%
\end{equation}

h) We have also the so-called Taylor expanding formula
\[
H_{n}(x+y)=\sum_{j=0}^{n}\binom{n}{j}x^{j}H_{n-j}(y).
\]

i) We have multiplication theorem%
\begin{equation}
H_{n}(\sigma z)=\sum_{j=0}^{\left\lfloor n/2\right\rfloor }\binom{n}{2j}%
\frac{(2j)!}{2^{j}j!}\sigma^{n-2j}(\sigma^{2}-1)^{j}H_{n-2j}(x). \label{MT}%
\end{equation}

From this formula, immediately follows the following result 3.%
\begin{align*}
&  \frac{1}{\sqrt{2\pi}}\int_{-\infty}^{\infty}H_{n}(\sigma x)\exp\left(
-\frac{x^{2}}{2}\right)  dx\\
&  =%
\begin{cases}
0\text{,} & \text{if }n\text{ is odd;}\\
\frac{(2m)!(\sigma^{2}-1)^{m}}{2^{m}m!}\text{,} & \text{if }n=2m\text{.}%
\end{cases}
.
\end{align*}

j)
\begin{equation}
\sum_{s=0}^{n}\binom{n}{s}H_{s}(z)(i)^{n-s}H_{n-s}(iz)=\delta_{0,n}
\label{splotH}%
\end{equation}

\end{lemma}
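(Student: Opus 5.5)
The common tool for every part of Lemma \ref{Her} will be the generating function
\[
\sum_{n\geq0}\frac{t^{n}}{n!}H_{n}(x)=\exp\left(xt-t^{2}/2\right),
\]
which follows from the three-term recurrence by differentiating in $t$. It is paired with the Rodrigues-type identity $f_{N}(x)H_{n}(x)=(-d/dx)^{n}f_{N}(x)$, obtained by differentiating $f_{N}(x-t)=f_{N}(x)\exp(xt-t^{2}/2)$ $n$ times at $t=0$. The plan is to reduce each part to an identity between exponentials.

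\emph{Parts a), e), f), g), h), i), j).} For a), I will integrate the product of two generating functions against $f_{N}$. This gives $\int e^{xs-s^{2}/2}e^{xt-t^{2}/2}f_{N}(x)dx=e^{st}$, and comparing coefficients of $s^{n}t^{m}$ yields (\ref{ortH}).

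For e), I will write $e^{xt}=e^{t^{2}/2}e^{xt-t^{2}/2}$ and expand; (\ref{Hnax}) comes from $e^{xt-t^{2}/2}=e^{xt}e^{-t^{2}/2}$. The rescaled version follows by substituting $x/a$.

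For f), I will multiply $e^{y(at)-(at)^{2}/2}$ by $e^{z(bt)-(bt)^{2}/2}$. The result is $e^{(ay+bz)t-(a^{2}+b^{2})t^{2}/2}$, which is the generating function of the left side of (\ref{H(sum)}) in the variable $\sqrt{a^{2}+b^{2}}\,t$.

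For h), I will use $e^{(x+y)t-t^{2}/2}=e^{xt}e^{yt-t^{2}/2}$.

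For i), I will use
\[
e^{\sigma zt-t^{2}/2}=e^{z(\sigma t)-(\sigma t)^{2}/2}e^{(\sigma^{2}-1)t^{2}/2},
\]
reading the argument on the right of (\ref{MT}) as $z$. Integrating against $f_{N}$ and using a) then kills every term except $j=n/2$, which gives the displayed integral.

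For g), I will compute $\int H_{n}H_{m}H_{k}f_{N}$ from the triple product of generating functions, $e^{st+su+tu}$. Combined with a), this gives the coefficients in (\ref{Lin}).

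For j), I will note that $\sum_{n}\frac{t^{n}}{n!}i^{n}H_{n}(iz)=\exp(iz\cdot it-(it)^{2}/2)=e^{-zt+t^{2}/2}$. Multiplied by $e^{zt-t^{2}/2}$ this gives $1$, and the Cauchy product is exactly (\ref{splotH}).

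\emph{Part b).} For the Mehler formula I will verify the equality of Fourier transforms in $x$, or more simply use a) together with conditioning. Write $f_{2N}(x,y|\rho)=f_{N}(y)f_{CN}(x|y,\rho)$. By a), the coefficient of $f_{N}(x)H_{n}(x)/n!$ in the expansion of $f_{CN}(\cdot|y,\rho)$ equals $E(H_{n}(X)|Y=y)$. Expanding $H_{n}(\rho y+\sqrt{1-\rho^{2}}\,W)$ by f), with $W\sim N(0,1)$, and taking expectations leaves $\rho^{n}H_{n}(y)$. Pointwise convergence for $|\rho|<1$ then follows from the standard bound $|H_{n}(x)|\leq C\sqrt{n!}\,e^{x^{2}/4}$.

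\emph{Parts c) and d).} Part c) is where the real work lies. The key step is to observe that $f_{N}(x)H_{j+m}(x)=(-\partial_{x})^{m}\bigl(f_{N}(x)H_{j}(x)\bigr)$, and similarly in $y$. Multiplying the left side of (\ref{Carlitz}) by $f_{N}(x)f_{N}(y)$ and using b), it therefore equals $(-\partial_{x})^{m}(-\partial_{y})^{k}f_{2N}(x,y|\rho)$. Termwise differentiation is justified by locally uniform convergence of the differentiated series.

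I will then write $f_{2N}=f_{N}(y)\,f_{N}(u)/\sqrt{1-\rho^{2}}$ with $u=(x-\rho y)/\sqrt{1-\rho^{2}}$. Applying $(-\partial_{x})^{m}$ produces the factor $H_{m}(u)/(1-\rho^{2})^{m/2}$. Applying $(-\partial_{y})^{k}$ via the Leibniz rule splits the derivatives between $f_{N}(y)$, which contributes $H_{k-s}(y)$, and the function of $u$. Since $\partial_{y}=-\frac{\rho}{\sqrt{1-\rho^{2}}}\partial_{u}$, the $s$ derivatives falling on the $u$-factor contribute $(-\rho)^{s}H_{m+s}(u)/(1-\rho^{2})^{(m+s)/2}$. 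This is exactly (\ref{Car2}); the only real obstacle is tracking the signs and powers of $\sqrt{1-\rho^{2}}$ correctly.

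Part d) is then immediate. The symmetry holds because the same derivative $(-\partial_{x})^{m}(-\partial_{y})^{k}$ of the symmetric function $f_{2N}$ can be computed with the roles of $x$ and $y$ exchanged. The case $k=0$ is the single term $s=0$ of (\ref{Car2}).
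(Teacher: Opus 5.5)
Your proof is correct, and for most parts it gives arguments the paper does not. The paper proves only j), using the same generating-function identity you use. It treats a) as known, refers to standard texts for b) and e)--i), and obtains c) and d) by setting $q=1$ in corresponding identities for $q$-Hermite polynomials from the author's earlier work.

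Your derivation of c) is genuinely different. By the Rodrigues formula, $f_N(x)f_N(y)$ times the left side of (\ref{Carlitz}) equals $(-\partial_x)^m(-\partial_y)^k f_{2N}(x,y|\rho)$. The factorization $f_{2N}=f_N(y)f_N(u)/\sqrt{1-\rho^2}$ with $u=(x-\rho y)/\sqrt{1-\rho^2}$, Leibniz' rule, and $-\partial_y=\frac{-\rho}{\sqrt{1-\rho^2}}(-\partial_u)$ then produce exactly the sum (\ref{Car2}). I checked the signs and the powers of $\sqrt{1-\rho^2}$, and they come out right. Termwise differentiation is legitimate: by Cram\'er's bound $|H_n(x)|\le C\sqrt{n!}\,e^{x^2/4}$, the $j$-th term of every intermediate differentiated series is $O\bigl(|\rho|^j (j+1)^{(m+k)/2}\bigr)$ uniformly in $(x,y)$.

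Your route gives a self-contained, elementary proof that explains where $Q_{m,k}$ comes from and makes d) transparent. For the symmetry, you compute the same derivative from the factorization with $x$ and $y$ exchanged and divide by $f_{2N}>0$. The paper's citation is shorter and fits the $q$-deformed framework it draws on, whereas your argument is tied to $q=1$.

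Two minor points. In b), the conditioning argument tacitly uses three facts:
(i) $f_{CN}(\cdot|y,\rho)/f_N$ lies in $L^2(f_N)$, which holds since $1/(1-\rho^2)>1/2$;
(ii) the Hermite polynomials are complete;
(iii) the pointwise sum agrees with the $L^2$ sum, since both are continuous and coincide a.e.
Your Fourier-transform alternative, based on $\int e^{i\xi x}f_N(x)H_n(x)\,dx=(i\xi)^n e^{-\xi^2/2}$, avoids completeness altogether.

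In e), substituting $x/a$ gives $x^n=\sum_{k}\frac{n!(a^2)^k}{2^k k!(n-2k)!}a^{n-2k}H_{n-2k}(x/a)$, without the leading factor $a^n$ in the displayed ``modification''. That factor is a typo in the statement, much like the argument $x$ on the right of (\ref{MT}), which you already read correctly as $z$.
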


\begin{proof}
Proofs b), e), f), g), h), i) of these formulae can be found in any book on
classical orthogonal polynomials such as, e.g., \cite{Andrews1999} or
\cite{IA} or simply Wikipedia. For the less obvious formula (\ref{splotH})
recall that it follows almost directly from the fact that $\sum_{n\geq0}%
H_{n}(x)\frac{t^{n}}{n!}\allowbreak=\allowbreak\exp(xt-t^{2}/2)$ and
$\sum_{n\geq0}H_{n}(ix)\frac{(it)^{n}}{n!}\allowbreak=\allowbreak
\exp(-xt+t^{2}/2),$ hence
\[
\sum_{n\geq0}\frac{t^{n}}{n!}\left(  \sum_{s=0}^{n}\binom{n}{s}H_{s}%
(z)(i)^{n-s}H_{n-s}(iz)\right)  =\exp(zt-t^{2}/2-zt+t^{2}/2)=1.
\]

c) and d) follow firstly unnumbered formula from the top of page 639 of
\cite{SzablAW} and further Lemma 3 i) and Corollary 3 of \cite{SzablAW} with
$q\allowbreak=\allowbreak1.$
\end{proof}

As an immediate corollary we have the following two simple observations:

\begin{corollary}
1.%
\[
\frac{x^{2}-2\rho xy+y^{2}}{2(1-\rho^{2})}-\frac{y^{2}}{2}=\frac{\left(
x-\rho y\right)  ^{2}}{2(1-\rho^{2})},
\]
consequently, we see that
\[
\int f_{CN}(x|y,\rho)f_{N}(y)dy=f_{N}(x).
\]
We used here denotation (\ref{cn}).

2. Using umbral identity we get
\begin{align*}
H_{n}\left(  \frac{x-\rho y}{\sqrt{1-\rho^{2}}}\right)  /\left(  1-\rho
^{2}\right)  ^{n/2}  &  =\sum_{s=0}^{n}\binom{n}{s}(-\rho)^{s}H_{n-s}(x)\\
&  \times H_{s}\left(  \frac{y-\rho x}{\sqrt{1-\rho^{2}}}\right)  /\left(
1-\rho^{2}\right)  ^{s/2}.
\end{align*}

\end{corollary}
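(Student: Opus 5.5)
Part 1 is pure algebra followed by an integral. I will put the left-hand side over the common denominator $2(1-\rho^{2})$. The numerator becomes $x^{2}-2\rho xy+y^{2}-(1-\rho^{2})y^{2}=x^{2}-2\rho xy+\rho^{2}y^{2}=(x-\rho y)^{2}$, which is the claimed identity. It follows that $f_{2N}(x,y|\rho)=f_{N}(y)f_{CN}(x|y,\rho)$: the exponent splits as shown, and the constants $\frac{1}{2\pi\sqrt{1-\rho^{2}}}=\frac{1}{\sqrt{2\pi}}\cdot\frac{1}{\sqrt{2\pi(1-\rho^{2})}}$ match. The same identity with $x$ and $y$ interchanged gives $f_{2N}(x,y|\rho)=f_{N}(x)f_{CN}(y|x,\rho)$. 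Then
\[
\int f_{CN}(x|y,\rho)f_{N}(y)\,dy=\int f_{2N}(x,y|\rho)\,dy=f_{N}(x)\int f_{CN}(y|x,\rho)\,dy=f_{N}(x),
\]
since $f_{CN}(\cdot|x,\rho)$ is the density $N(\rho x,1-\rho^{2})$ and integrates to $1$. Equivalently, one may say that the marginal of $X$ under $f_{2N}$ is $N(0,1)$ by (\ref{margin}).

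For part 2 I will apply the umbral identity (\ref{H(sum)}) with $n$ fixed, choosing the variables and coefficients so that the combination matches. First I write
\[
\frac{x-\rho y}{\sqrt{1-\rho^{2}}}=\frac{ay'+bz'}{\sqrt{a^{2}+b^{2}}}
\]
with $y'=x$ and $z'=\frac{y-\rho x}{\sqrt{1-\rho^{2}}}$. This requires
\[
x-\rho y=(1-\rho^{2})x-\rho(y-\rho x),
\]
which holds identically. So I take $a=1-\rho^{2}$ and $b=-\rho\sqrt{1-\rho^{2}}$. Then $a^{2}+b^{2}=(1-\rho^{2})^{2}+\rho^{2}(1-\rho^{2})=1-\rho^{2}$, and hence
\[
\frac{ay'+bz'}{\sqrt{a^{2}+b^{2}}}=\frac{(1-\rho^{2})x-\rho(y-\rho x)}{\sqrt{1-\rho^{2}}}=\frac{x-\rho y}{\sqrt{1-\rho^{2}}},
\]
as required.

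Substituting into (\ref{H(sum)}) gives
\[
(1-\rho^{2})^{n/2}H_{n}\!\left(\frac{x-\rho y}{\sqrt{1-\rho^{2}}}\right)=\sum_{k}\binom{n}{k}(1-\rho^{2})^{k}H_{k}(x)(-\rho)^{n-k}(1-\rho^{2})^{(n-k)/2}H_{n-k}(z').
\]
I divide both sides by $(1-\rho^{2})^{n}$ and reindex with $s=n-k$. The power of $(1-\rho^{2})$ on the right becomes $k+(n-k)/2-n=-s/2$, and on the left it becomes $-n/2$. This yields exactly the stated formula.

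The only point that needs care is finding the right choice of $a,b$; the remaining work is checking exponents, which I have just done. One caveat concerns the umbral identity when $b$ is negative. It still holds, because it is a polynomial identity in $a$ and $b$ once $(a^{2}+b^{2})^{n/2}H_{n}(\cdot/\sqrt{a^{2}+b^{2}})$ is expanded: that expression is a homogeneous polynomial, so the sign of $b$ causes no trouble.
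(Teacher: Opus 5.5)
Your proposal is correct and follows the same route the paper indicates. In part 1 you complete the square, so that $f_{2N}(x,y|\rho)=f_{N}(y)f_{CN}(x|y,\rho)$, and then integrate using the symmetry of $f_{2N}$; in part 2 you apply the umbral identity (\ref{H(sum)}) with $a=1-\rho^{2}$, $b=-\rho\sqrt{1-\rho^{2}}$ to $x$ and $(y-\rho x)/\sqrt{1-\rho^{2}}$, and the exponent bookkeeping supplies exactly the details the paper leaves implicit.
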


Using this lemma, we will derive auxiliary formulas, which we will compile
into the Proposition below.

\begin{proposition}
\label{AHer}For all $m,a\in\mathbb{R}$ and positive $\sigma$ we have

1.%
\begin{gather*}
d_{n,k}(m,\sigma)=\frac{1}{\sigma\sqrt{2\pi}}\int_{-\infty}^{\infty}%
H_{n}(x)H_{k}(x)\exp\left(  -\frac{(x-m)^{2}}{2\sigma^{2}}\right)  dx\\
=\sum_{j=0}^{\min(n,k)}\binom{n}{j}\binom{k}{j}j!\sum_{s=0}^{\left\lfloor
(n+k)/2\right\rfloor -j}\frac{(n+k-2j)!a^{n+k-2j-2k}}{2^{s}(n+k-2j-2s)}\\
\times(\sigma^{2}-1+a^{2})^{s}H_{n+k-2j-2s}(m/a).
\end{gather*}

2. In particular for $k\allowbreak=\allowbreak0$ we get%
\begin{align*}
&  \frac{1}{\sigma\sqrt{2\pi}}\int_{-\infty}^{\infty}H_{n}(x)\exp\left(
-\frac{(x-m)^{2}}{2\sigma^{2}}\right)  dx\\
&  =\sum_{s=0}^{\left\lfloor n/2\right\rfloor }\frac{n!a^{n-2s}}%
{2^{s}(n-2s)!s!}H_{n-2s}(m/a)(\sigma^{2}+a^{2}-1)^{s}.
\end{align*}

If we take $\sigma\allowbreak=\allowbreak\sqrt{1-\rho^{2}},$ $m\allowbreak
=\allowbreak\rho y$ and $a\allowbreak=\allowbreak\rho$ we get $\sigma^{2}%
+\rho^{2}-1\allowbreak=\allowbreak0$ and consequently%
\begin{equation}
\frac{1}{\sqrt{2\pi(1-\rho^{2})}}\int_{-\infty}^{\infty}H_{n}(x)\exp\left(
-\frac{(x-\rho y)^{2}}{2(1-\rho^{2})}\right)  dx=\rho^{n}H_{n}(y).\label{Cond}%
\end{equation}

3.
\[
\frac{1}{\sqrt{2\pi}}\int_{-\infty}^{\infty}H_{n}(\sigma x)\exp\left(
-\frac{x^{2}}{2}\right)  dx=%
\begin{cases}
0\text{,} & \text{if }n\text{ is odd;}\\
\frac{(2m)!}{2^{m}m!}\left(  \sigma^{2}-1\right)  ^{m}\text{,} & \text{if
}n\allowbreak=\allowbreak2m\text{.}%
\end{cases}
\]

4. If $\left[
\begin{array}
[c]{c}%
X\\
Y
\end{array}
\right]  \sim N\left(  \left[
\begin{array}
[c]{c}%
0\\
0
\end{array}
\right]  ,\left[
\begin{array}
[c]{cc}%
1 & \rho\\
\rho & 1
\end{array}
\right]  \right)  ,$ then%
\[
E\left(  H_{n}(X)H_{m}(Y)\right)  =%
\begin{cases}
0\text{,} & \text{if }n\neq m\text{;}\\
n!\rho^{n}\text{,} & \text{if }n\allowbreak=\allowbreak m\text{.}%
\end{cases}
\]

5.
\begin{equation}
H_{n}(\left(  X-m\right)  /\sigma)=\frac{1}{\sigma^{n}}\sum_{s=0}^{n}%
(-1)^{s}\binom{n}{s}H_{n-s}(x)\sum_{j=0}^{\left\lfloor s/2\right\rfloor }%
\frac{m^{s-2j}(1-\sigma^{2})^{j}}{2^{j}j!(s-2j)!}, \label{Hn1}%
\end{equation}
and in particular%
\begin{equation}
H_{n}\left(  \frac{x-\rho z}{\sqrt{1-\rho^{2}}}\right)  =\frac{1}{\left(
1-\rho^{2}\right)  ^{n/2}}\sum_{s=0}^{n}(i\rho)^{s}\binom{n}{s}H_{n-s}%
(x)H_{s}(iz). \label{Hn2}%
\end{equation}

6) Let us define the integer functions defined by
\begin{align*}
r_{k_{1},\ldots,k_{n}}  &  =E\prod_{j=1}^{n}H_{k_{j}}(X)\\
&  =\int_{\mathbb{R}}\left(  \prod_{j=1}^{n}H_{k_{j}}(z)\right)  f_{N}(z)dz,
\end{align*}
where $X$ is normal random variable $N(0,1).$ The generating function if this
function is equal to
\begin{equation}
\sum_{k_{1}}^{\infty}\ldots\sum_{k_{n}}^{\infty}r_{k_{1},\ldots,k_{n}}%
\prod_{j=1}^{n}\frac{t_{j}^{k_{j}}}{k_{j}!}=\exp\left(  \frac{1}{2}\left(
\sum_{j=1}^{n}t_{j}\right)  ^{2}-\frac{1}{2}\sum_{j=1}^{n}t_{j}^{2}\right)  .
\label{genr}%
\end{equation}
In particular we have
\[
r_{n}\allowbreak=\allowbreak\delta_{0,n},~~r_{n,m}\allowbreak=\allowbreak%
\begin{cases}
0\text{,} & \text{if }n\neq m\text{;}\\
n!\text{,} & \text{if }n=m\text{. }%
\end{cases}
,
\]

\begin{equation}
r_{n,m,k}=%
\begin{cases}
0\text{,} &
\begin{array}{l}
\text{if }n+m+k\text{ is odd or }n>k+m\\
\text{or }m>n+k\text{ or }k>n+m;
\end{array}
\\
\frac{m!n!k!}{(\frac{n+k-m}{2})!\left(  \frac{n+m-k}{2}\right)  !\left(
\frac{m+k-n}{2}\right)  !}\text{,} & \text{otherwise.}%
\end{cases}
, \label{r3}%
\end{equation}

Assuming $n\geq m\geq k\geq j$ we have $r_{n,m,k,j}\allowbreak=\allowbreak0 $
if $n+m+k+j$ is odd and%

\begin{align}
r_{n,m,k,j}  &  =\sum_{s=0}^{j}\binom{n}{s+\frac{n+m-j-k}{2}}\binom{m}%
{s+\frac{n+m-j-k}{2}}\binom{k}{s}\times,\label{r41}\\
&  \binom{j}{s}s!(s+\frac{n+m-j-k}{2})!(k+j-2s)!\nonumber
\end{align}

Notice also that the sequence $r_{n,n,n,n}$ divided by $n!^{2}$ i.e., the
sequence $\left\{  \sum_{j=0}^{n}\binom{n}{j}^{2}\binom{2j}{j}\right\}
_{n\geq0}$ is the A002893 sequence of OESIS, while $r_{n,n,n-1,n-1}$ divided
by $n!(n-1)!$ for $k\allowbreak=\allowbreak j\allowbreak=\allowbreak n-1$,
i.e., the sequence $\left\{  \sum_{j=0}^{n-1}\binom{n}{j}\binom{n-1}{j}%
\binom{2j}{j}\right\}  _{n\geq1}$ is the A087457 sequence in OESIS.
\end{proposition}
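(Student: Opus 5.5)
The unifying tool is the exponential generating function $\sum_{n\ge0}H_n(x)t^n/n!=\exp(xt-t^2/2)$, together with completing the square in Gaussian integrals; most items reduce to it or to parts of Lemma \ref{Her}.

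\textbf{Items 1--3.} For item 2, I substitute $x=m+\sigma u$ with $u\sim N(0,1)$. I write $m+\sigma u=a\bigl(m/a+(\sigma/a)u\bigr)$ and expand $H_n$ by the Taylor formula h) together with the multiplication theorem (\ref{MT}). Equivalently, and more cleanly, I use generating functions:
\[
E\exp\bigl((m+\sigma u)t-t^2/2\bigr)=\exp\bigl(mt+(\sigma^2-1)t^2/2\bigr).
\]
I rewrite the exponent as $a\cdot(m/a)t-a^2t^2/2+(\sigma^2+a^2-1)t^2/2$. Expanding the first two terms gives $\sum_k H_k(m/a)a^kt^k/k!$, and the last factor gives $\sum_s(\sigma^2+a^2-1)^st^{2s}/(2^ss!)$. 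Comparing coefficients yields item 2, and the choice $\sigma^2=1-\rho^2$, $a=\rho$ gives (\ref{Cond}). Item 1 follows by linearizing $H_nH_k$ with (\ref{Lin}) and applying item 2 termwise; I will read the printed factorials and exponents as typos for the ones this produces. Item 3 is item 2 with $m=0$, after rescaling, or directly (\ref{MT}) integrated against $f_N$.

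\textbf{Item 4.} I condition on $Y$: given $Y=y$, the variable $X$ has density $f_{CN}(\cdot|y,\rho)$. Then (\ref{Cond}) gives $E(H_n(X)|Y)=\rho^nH_n(Y)$, and orthogonality (\ref{ortH}) finishes the argument.

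\textbf{Item 5.} Formula (\ref{Hn1}) again follows by comparing generating functions, since
\[
\exp\bigl(t(x-m)/\sigma-t^2/2\bigr)=\exp\bigl(x(t/\sigma)-(t/\sigma)^2/2\bigr)\exp\bigl(-mt/\sigma+(1-\sigma^2)t^2/(2\sigma^2)\bigr).
\]
The first factor produces $H_{n-s}(x)$, and the second factor's coefficients are those stated. For (\ref{Hn2}) I take $m=\rho z$ and $\sigma^2=1-\rho^2$. The second factor then becomes $\exp(-\rho zu+\rho^2u^2/2)$ with $u=t/\sigma$, which equals $\sum_s H_s(iz)(i\rho u)^s/s!$ because $\exp\bigl((iz)(i\rho u)-(i\rho u)^2/2\bigr)$ is exactly this. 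This is the same trick used in the proof of (\ref{splotH}).

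\textbf{Item 6.} Multiplying the generating functions gives
\[
\prod_j\exp(zt_j-t_j^2/2)=\exp\bigl(zT-\textstyle\sum t_j^2/2\bigr),\qquad T=\sum_j t_j .
\]
Integrating against $f_N$ gives $\exp(T^2/2-\sum t_j^2/2)=\exp\bigl(\sum_{j<l}t_jt_l\bigr)$, which is (\ref{genr}).
\begin{itemize}
\item The cases $r_n$ and $r_{n,m}$ are immediate.
\item For $r_{n,m,k}$, I expand $\exp(t_1t_2+t_1t_3+t_2t_3)$ as a product of three exponential series. The exponents $p_{12},p_{13},p_{23}$ are forced by $p_{12}+p_{13}=n$ and its analogues, so $p_{12}=(n+m-k)/2$, and so on. This gives (\ref{r3}) together with the parity and triangle conditions.
\item For four indices there are six pair exponents subject to four constraints, leaving a two-parameter family.
\end{itemize}
The main obstacle is reducing this last case to the single sum (\ref{r41}) and identifying the free parameter $s$ correctly under the ordering $n\ge m\ge k\ge j$. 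I would first try instead to linearize $H_kH_j$ via (\ref{Lin}) into $\sum_s\binom ks\binom jss!H_{k+j-2s}$. Then $r_{n,m,k,j}=\sum_s\binom ks\binom jss!\,r_{n,m,k+j-2s}$, and plugging in (\ref{r3}) should give (\ref{r41}) after simplification; I will keep any vanishing terms implicit. The OEIS identifications then come from specializing $n=m=k=j$ and $k=j=n-1$ and simplifying.
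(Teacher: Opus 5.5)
Your proposal is correct. For most items it takes a genuinely different route from the paper.

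\textbf{How the paper argues.} The paper stays with finite identities throughout:
\begin{itemize}
\item Item 3 comes from (\ref{MT}).
\item Item 2 comes from the Taylor formula h) plus item 3. The paper then re-expands $(m/a)^{n-2j}$ by (\ref{xnaH}) and collapses a binomial sum into $(\sigma^2+a^2-1)^s$.
\item Item 5 comes from the Taylor formula and (\ref{MT}) with a change of summation order. Then (\ref{Hn2}) is obtained by substituting into (\ref{Hn1}) and recognizing the inner sum via (\ref{Hnax}).
\item Item 4 comes from the Mehler expansion (\ref{Meh}) of the conditional density.
\item $r_{n,m,k}$ comes from (\ref{Lin}) plus orthogonality.
\end{itemize}

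\textbf{What your route buys.} You run items 2, 3, 5 and (\ref{r3}) through the single identity $\sum_n H_n(x)t^n/n!=e^{xt-t^2/2}$. This has three advantages:
\begin{itemize}
\item The free parameter $a$ becomes a one-line splitting of the exponent instead of a re-summation.
\item (\ref{Hn2}) comes out directly, by recognizing $\exp(-\rho zu+\rho^2u^2/2)$ as the generating function at $iz$, with no detour through (\ref{Hn1}).
\item (\ref{r3}) comes with the pair exponents $p,q,r$ that reappear in the three-dimensional Kibble--Slepian formula.
\end{itemize}
The cost is justifying the exchange of $E$ with $\sum_n$. This is routine because $\sum_n|H_n(x)||t|^n/n!\le e^{|x||t|+t^2/2}$.

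Your item 4 uses only (\ref{Cond}) rather than (\ref{Meh}). That fits better with the paper's later claim to recover Mehler's formula from its own machinery.

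\textbf{Where you match the paper.} Item 1, (\ref{genr}) and $r_{n,m,k,j}$ are essentially the paper's argument. The step you call the main obstacle does go through. Set $d=(n+m-k-j)/2$ and $l=k+j-2s$. Then (\ref{r3}) gives $r_{n,m,l}=\binom{n}{s+d}\binom{m}{s+d}(s+d)!\,l!$. Multiplying by $\binom{k}{s}\binom{j}{s}s!$ gives exactly the summand of (\ref{r41}). Under $n\ge m\ge k\ge j$, the only triangle condition that can fail is $s+d\le m$, and that is exactly where $\binom{m}{s+d}=0$.

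\textbf{One correction, to item 5.} The coefficients are not ``those stated''. Comparing coefficients of $t^n$ puts $n!/(n-s)!$, not $\binom{n}{s}$, in front of $H_{n-s}(x)$. So (\ref{Hn1}) as printed is missing a factor $s!$. For example, with $n=2$, $\sigma=1$, $m=1$ the left side is $x^2-2x$, while the printed right side is $x^2-2x-\tfrac12$. The paper's own derivation also yields $n!/(n-s)!$, so this is a misprint. You should state the corrected formula, as you did for item 1. Your proof of (\ref{Hn2}), which is correct as printed, is unaffected because it bypasses (\ref{Hn1}).
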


\begin{proof}
From the formula (\ref{MT}) immediately follow the following result 3.%
\begin{align*}
&  \frac{1}{\sqrt{2\pi}}\int_{-\infty}^{\infty}H_{n}(\sigma x)\exp\left(
-\frac{x^{2}}{2}\right)  dx\\
&  =%
\begin{cases}
0\text{,} & \text{if }n\text{ is odd;}\\
\frac{(2m)!(\sigma^{2}-1)^{m}}{2^{m}m!}\text{,} & \text{if }n=2m\text{.}%
\end{cases}
\end{align*}

Now to get 2. we proceed as follows
\begin{gather*}
\frac{1}{\sigma\sqrt{2\pi}}\int_{-\infty}^{\infty}H_{n}(x)\exp\left(
-\frac{(x-m)^{2}}{2\sigma^{2}}\right)  dx\\
=\frac{1}{\sqrt{2\pi}}\int_{-\infty}^{\infty}H_{n}(\sigma x+m)\exp\left(
-\frac{x^{2}}{2}\right)  dx\\
=\frac{1}{\sqrt{2\pi}}\int_{-\infty}^{\infty}\sum_{j=0}^{n}\binom{n}{j}%
m^{n-j}H_{j}(\sigma y)\exp\left(  -\frac{x^{2}}{2}\right)  dx\\
=\sum_{j=0}^{\left\lfloor n/2\right\rfloor }\frac{n!}{2^{j}j!(n-2j)!}%
m^{n-2j}(\sigma^{2}-1)^{j}.
\end{gather*}
Further we have
\begin{align*}
&  \frac{1}{\sigma\sqrt{2\pi}}\int_{-\infty}^{\infty}H_{n}(x)\exp\left(
-\frac{(x-m)^{2}}{2\sigma^{2}}\right)  dx\\
&  =\sum_{j=0}^{\left\lfloor n/2\right\rfloor }\frac{n!a^{n-2j}}%
{2^{j}j!(n-2j)!}(m/a)^{n-2j}(\sigma^{2}-1)^{j}\\
&  =\sum_{j=0}^{\left\lfloor n/2\right\rfloor }\frac{n!a^{n-2j}}%
{2^{j}j!(n-2j)!}(\sigma^{2}-1)^{j}\sum_{k=0}^{\left\lfloor n/2\right\rfloor
-j}\frac{(n-2j)!}{2^{k}k!(n-2j-2k)!}H_{n-2j-2k}(m/a)\\
&  =\sum_{s=0}^{\left\lfloor n/2\right\rfloor }\frac{n!a^{n-2s}}%
{2^{s}(n-2s)!s!}H_{n-2s}(m/a)\sum_{j=0}^{s}\binom{s}{j}(\sigma^{2}%
-1)^{j}a^{2(s-j)}\\
&  =\sum_{s=0}^{\left\lfloor n/2\right\rfloor }\frac{n!a^{n-2s}}%
{2^{s}(n-2s)!s!}H_{n-2s}(m/a)(\sigma^{2}+a^{2}-1)^{s}.
\end{align*}

To get 1. we apply (\ref{Lin}) and the proceed as in 2.

4. We proceed as follows. From (\ref{Meh}) we deduce that the conditional
density of $X|Y=y$ can we expanded in the series
\[
\frac{1}{\sqrt{2\pi}}\exp(-x^{2}/2)\sum_{j\geq0}\frac{\rho^{j}}{j!}%
H_{j}(x)H_{j}(y).
\]
Hence,
\[
E(H_{n}(X)|Y=y)=\rho^{n}H_{n}(y).
\]

5. Applying assertion f of Lemma \ref{Her} and then assertion g we get%
\begin{gather*}
H_{n}(\left(  X-m\right)  /\sigma)=\sum_{k=0}^{n}\binom{n}{k}(m/\sigma
)^{n-k}H_{k}(x/\sigma)\\
\frac{1}{_{\sigma^{n}}}\sum_{k=0}^{n}\frac{(-1)^{n-k}m^{n-k}n!}{(n-k)!}%
\sum_{j=0}^{\left\lfloor k/2\right\rfloor }\frac{(1-\sigma^{2})^{j}}%
{2^{j}j!(k-2j)!}H_{k-2j}(x).
\end{gather*}
Now it remains to change the order of summation. To get (\ref{Hn2}) we insert
$\sigma\allowbreak=\allowbreak\sqrt{1-\rho^{2}}$ and $m\allowbreak=\allowbreak
z\rho.$ Then we have to notice that n%
\[
\sum_{j=0}^{\left\lfloor s/2\right\rfloor }\frac{m^{s-2j}(1-\sigma^{2})^{j}%
}{2^{j}j!(s-2j)!}=\frac{\rho^{s}}{s!}\sum_{j=0}^{\left\lfloor s/2\right\rfloor
}\frac{s!z^{s-2j}}{2^{j}j!(s-2j)!}=\frac{i^{s}\rho^{s}}{s!}H_{s}(iz),
\]
by (\ref{Hnax})

6. Using formula (18.18.11) of \cite{Nist} we easily deduce that we have
\begin{align*}
\sum_{m_{1}\geq0}\ldots\sum_{m_{n}\geq0}\prod_{j=1}^{n}\frac{a_{j}^{m_{j}}%
}{m_{j}!}H_{m_{j}}(x)  &  =\sum_{n\geq0}\frac{\left(  \sum_{j=1}^{n}a_{j}%
^{2}\right)  ^{n/2}}{n!}H_{n}\left(  \frac{x\sum_{j=1}^{n}a_{j}}{\sqrt
{\sum_{j=1}^{n}a_{j}^{2}}}\right) \\
&  =\exp\left(  x\frac{\sum_{j=1}^{n}a_{j}}{\sqrt{\sum_{j=1}^{n}a_{j}^{2}}%
}\sqrt{\sum_{j=1}^{n}a_{j}^{2}}-\frac{1}{2}\sum_{j=1}^{n}a_{j}^{2}\right)  .
\end{align*}
Now we have to integrate this function times $f_{N}(x)$ with respect to $x$.
We get then%
\begin{align*}
&  \int\exp\left(  x\frac{\sum_{j=1}^{n}a_{j}}{\sqrt{\sum_{j=1}^{n}a_{j}^{2}}%
}\sqrt{\sum_{j=1}^{n}a_{j}^{2}}-\frac{1}{2}\sum_{j=1}^{n}a_{j}^{2}\right)
f_{N}(x)dx\\
&  =\exp\left(  -\frac{1}{2}\sum_{j=1}^{n}a_{j}^{2}\right)  \frac{1}%
{\sqrt{2\pi}}\int\exp\left(  x\sum_{j=1}^{n}a_{j}-x^{2}/2\right)  dx\\
&  =\exp\left(  -\frac{1}{2}\sum_{j=1}^{n}a_{j}^{2}\right)  \exp\left(
\left(  \sum_{j=1}^{n}a_{j}\right)  ^{2}/2\right)  .
\end{align*}
That is the formula (\ref{genr}). Using (\ref{Lin}) we have%
\[
r_{n,m,k}=\sum_{j=0}^{\min(n,m)}\binom{n}{j}\binom{m}{j}j!\int H_{n+m-2k}%
(x)H_{k}(x)f_{N}(x)dx.
\]
Now notice that
\[
E\left(  H_{n+m-2j}(Z)H_{k}(Z)\right)  =%
\begin{cases}
0\text{,} & \text{if }k\neq n+m-2j\text{;}\\
k!\text{,} & \text{if }k=n+m-2j\text{.}%
\end{cases}
\]
that is this quantity is non-zero if $n+m+k$ is even and $j\allowbreak
=\allowbreak(n+m-k)/2.$ Notice also that then $m-j\allowbreak=\allowbreak
(m+k-n)/2$ and $n-j\allowbreak=\allowbreak(n+k-m)/2.$

Hence, we have (\ref{r3}). Using (\ref{Lin}) twice we easily show (\ref{r41}).
\end{proof}

\section{Main results \label{GmN}}

We start with the following general simple observation.

\begin{lemma}
Let the $n-$dimensional random vector $\mathbf{X\allowbreak=\allowbreak(}%
X_{1},\ldots,X_{n})^{T}$ be normaly distributed with $N(\mathbf{0,\Sigma)}$,
with positive definite matrix $\mathbf{\Sigma\allowbreak=\allowbreak}\left[
\rho_{j,k}\right]  $ with $\rho_{j,j}\allowbreak=\allowbreak1,$
$j,k\allowbreak=\allowbreak1\ldots,n$. This means that all marginal
$1-$dimensional distribution are $N(0,1)$.

1. The density of this distribution can be expanded in the following infinite
series
\begin{equation}
f_{mN}(x_{1}\ldots,x_{n}|\mathbf{\Sigma)=}\prod_{j=1}^{n}f_{N}(x_{j}%
)\sum_{k_{1},\ldots,k_{n}}c_{k_{1},\ldots,k_{n}}\prod_{j=1}^{n}H_{k_{j}}%
(x_{j}). \label{rozkl}%
\end{equation}

2. The coefficients of this expansion are given by equation%
\begin{equation}
c_{k_{1},\ldots,k_{n}}\prod_{j=1}^{n}k_{j}!=E\left(  \prod_{j=1}^{n}H_{k_{j}%
}(X_{j})\right)  \label{coef}%
\end{equation}

3. Moreover, the coefficients $c_{k_{1},\ldots,k_{n}}$ take on non-zero values
if $\sum_{j=1}^{n}k_{j}$ is even and for all $j\allowbreak=\allowbreak
1,\ldots,n$%
\begin{equation}
k_{j}\leq\sum_{s\neq j}k_{s}. \label{PP}%
\end{equation}

\end{lemma}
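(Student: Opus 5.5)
Part 1 is an $L^{2}$ expansion in an orthogonal basis. Set $g(\mathbf{x})=f_{mN}(\mathbf{x}|\mathbf{\Sigma})/\prod_{j}f_{N}(x_{j})$. I will show that $g\in L^{2}(\prod_{j}f_{N}(x_{j})d\mathbf{x})$. This amounts to finiteness of $\int f_{mN}^{2}/\prod f_{N}\,d\mathbf{x}$, a Gaussian integral with quadratic form $2\mathbf{\Sigma}^{-1}-\mathbf{I}$. Finiteness therefore requires $2\mathbf{\Sigma}^{-1}-\mathbf{I}$ to be positive definite, i.e. all eigenvalues of $\mathbf{\Sigma}$ lie below $2$. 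This does \emph{not} hold for every positive definite correlation matrix, and I expect it to be the main obstacle.

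I would first treat the case $\lambda_{\max}(\mathbf{\Sigma})<2$. There the products $\prod_{j}H_{k_{j}}(x_{j})/\sqrt{k_{j}!}$ form an orthonormal basis of the product-Gaussian $L^{2}$ space, since by (\ref{ortH}) they are orthonormal and they are complete as tensor products of complete one-dimensional bases. Hence (\ref{rozkl}) holds in $L^{2}$. For the general case I would instead interpret (\ref{rozkl}) as a formal or Abel-type expansion. Concretely, replace $\rho_{j,k}$ by $t\rho_{j,k}$ for $j\neq k$, so that for small $t$ the eigenvalues are near $1$. The coefficients are polynomial in $t$ (Part 3 makes $c$ homogeneous of degree $\sum k_{j}/2$ in the $\rho$'s), and one can then continue in $t$. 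Pointwise convergence for all $\mathbf{\Sigma}$ is a heavier issue, and I would defer it to the K-S form derived later.

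Part 2 is immediate once (\ref{rozkl}) holds in $L^{2}$ (or termwise under the regularization). Multiply both sides by $\prod_{j}H_{m_{j}}(x_{j})$ and integrate. On the right, orthogonality (\ref{ortH}) kills every term except $\mathbf{k}=\mathbf{m}$, leaving $c_{\mathbf{m}}\prod_{j}m_{j}!$. On the left we get $\int\prod_{j}H_{m_{j}}(x_{j})f_{mN}\,d\mathbf{x}=E\prod_{j}H_{m_{j}}(X_{j})$. This is (\ref{coef}).

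Part 3 uses (\ref{coef}) together with two facts.
\begin{itemize}
\item \emph{Parity.} $\mathbf{X}$ and $-\mathbf{X}$ have the same law and $H_{k}(-x)=(-1)^{k}H_{k}(x)$. Hence $E\prod_{j}H_{k_{j}}(X_{j})=(-1)^{\sum k_{j}}E\prod_{j}H_{k_{j}}(X_{j})$, which vanishes when $\sum k_{j}$ is odd.
\item \emph{Condition (\ref{PP}).} Fix $j$ and condition on $\{X_{s}\}_{s\neq j}$. The conditional law of $X_{j}$ is normal with mean linear in the $X_{s}$ and constant variance, so the product $\prod_{s\neq j}H_{k_{s}}(X_{s})$ is a polynomial in the $X_{s}$ of degree at most $K_{j}=\sum_{s\neq j}k_{s}$. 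It is cleaner to condition the other way: by the Proposition preceding Proposition \ref{AHer} (with $X_{j}$ in the role of $X_{n}$), $E(\prod_{s\neq j}H_{k_{s}}(X_{s})\,|\,X_{j}=z)$ is the expectation of a polynomial of degree $K_{j}$ in $\mathbf{\rho}z+\mathbf{W}$, where $\mathbf{W}\sim N(\mathbf{0},\mathbf{\Sigma}_{1})$ is independent of the conditioning. It is therefore a polynomial $p(z)$ of degree at most $K_{j}$. Then $E\prod_{s}H_{k_{s}}(X_{s})=\int p(z)H_{k_{j}}(z)f_{N}(z)\,dz=0$ whenever $k_{j}>K_{j}$, by orthogonality (\ref{ortH}).
\end{itemize}
Hence the coefficients can be non-zero only when $\sum k_{j}$ is even and every $k_{j}\le\sum_{s\neq j}k_{s}$.
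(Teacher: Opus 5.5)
For Parts 2 and 3 your argument is the paper's. The coefficients come from orthogonality. Part 3 combines the degree bound on $E(\prod_{s\neq j}H_{k_s}(X_s)\mid X_j=z)$ with orthogonality of $H_{k_j}$, plus parity; your $\mathbf{X}\mapsto-\mathbf{X}$ symmetry is a cleaner form of the paper's appeal to Isserlis' theorem. You also rightly prove Part 3 only as a necessary condition, which is all that is true: for $\mathbf{\Sigma}=\mathbf{I}$ every $c_{\mathbf{k}}$ with $\mathbf{k}\neq\mathbf{0}$ vanishes.

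On Part 1 you are more careful than the paper. Its proof infers membership in $L_2(\mathbb{R}^n,\mathbf{\mu}_n)$ from finiteness of $\int f_{mN}^2\,d\mathbf{x}$, which is not the relevant quantity. Expansion (\ref{rozkl}) with coefficients (\ref{coef}) is the expansion of $f_{mN}/\prod_j f_N(x_j)$. Its $L_2(\mathbf{\mu}_n)$-norm is finite iff $2\mathbf{\Sigma}^{-1}-\mathbf{I}$ is positive definite, i.e.\ $\lambda_{\max}(\mathbf{\Sigma})<2$, exactly as you say. So your restriction reflects a defect of the statement, not a gap in your argument.

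Your proposed remedy for the general case does not work, though. The matrix $\mathbf{\Sigma}_t=(1-t)\mathbf{I}+t\mathbf{\Sigma}$ has $\lambda_{\max}(\mathbf{\Sigma}_t)=1+t(\lambda_{\max}(\mathbf{\Sigma})-1)$. This reaches $2$ at $t=1/(\lambda_{\max}(\mathbf{\Sigma})-1)<1$ whenever $\lambda_{\max}(\mathbf{\Sigma})>2$. Hence neither $L^2$ convergence nor Abel summation can be carried to $t=1$; only the formal identity of Taylor coefficients in the $\rho_{j,l}$ survives.

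Pointwise convergence fails as well, so deferring to the later K-S form does not help. Take $n=3$, $\rho_{12}=\rho_{13}=\rho_{23}=\rho\in(1/2,1)$ (so $\lambda_{\max}=1+2\rho>2$) and $x_1=x_2=x_3=0$. For $k_1=k_2=k_3=2m$, (\ref{3H}) gives the term $(-1)^m\rho^{3m}((2m)!)^3/(8^m(m!)^6)\approx(-1)^m(8\rho^3)^m/(\pi m)^{3/2}$. This is unbounded, so the series diverges under every enumeration of the multi-indices. The K-S reindexing by $p,q,r$ is the same series. Part 1 therefore needs the extra hypothesis $\lambda_{\max}(\mathbf{\Sigma})<2$, or a purely formal or specifically iterated reading.

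A small point: homogeneity of $c_{\mathbf{k}}$ of degree $\sum_j k_j/2$ in the $\rho_{j,l}$ is true, but it is not a consequence of Part 3. It follows from $E\prod_j\exp(s_jX_j-s_j^2/2)=\exp(\sum_{j<l}\rho_{j,l}s_js_l)$.
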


\begin{proof}
1. The fact that expansion \ref{rozkl} is possible follows the fact that the
following integral
\[
\underset{\mathbb{R}^{n}}{\int\ldots\int}f_{mN}^{2}(x_{1}\ldots,x_{n}%
|\mathbf{\Sigma)}dx_{1}\ldots dx_{n}%
\]
is finite, This means that the density $f_{mN}$ belongs to the space of square
integrable functions $L_{2}(\mathbb{R}^{n},\mathbf{\mu}_{n}),$ where,
$\mathbf{\mu}_{n}$ denotes $n-$dimensional product measure of normal $N\left(
0,1\right)  $ measures. The polynomials $\prod_{j=1}^{n}H_{k_{j}}(x_{j}),$
$k_{1},\ldots,k_{n}\geq0$ constitute a base in this space. Hence, the
(\ref{rozkl}) exists.

2. Let us multiply both sides of (\ref{rozkl}) by $\prod_{j=1}^{n}H_{k_{j}%
}(x_{j})$ for some fixed $k_{1},\ldots,k_{n}.$ and the let us integrate the
product over $\mathbb{R}^{n}$. Since Hermite polynomials are orthogonal with
respect to $f_{N}$, we get the desired identity.

3 Let us recall two specific features of the multivariate normal distribution
. Namely, that all its marginal distribution are also normal and that all its
conditional moments of order say $n$ are almost surely polynomial of order not
exceeding $n.$ Hence, we deduce that say
\[
E\left(  \prod_{j=1}^{n-1}H_{k_{j}}(X_{j})|X_{n}\right)  ,
\]
is a polynomial $P_{s}(X_{n})$ in $X_{n}$ of order not exceeding
$s\allowbreak=\allowbreak\sum_{j=1}^{n-1}k_{j}.$ Consequently, we must have
$EP_{s}(X_{n})H_{l}(X_{n})\allowbreak=\allowbreak0,$ whenever $l>s$. Let us
recall the Isserlis Theorem which states that the moments of multivariate
normal distribution of odd order are equal to zero. \ Also observe that
Hermite polynomials of odd order consist solely of odd powers of the variable,
whereas those of even order are made up exclusively of even powers. Hence, the
product $\prod_{j=1}^{n}H_{k_{j}}(j_{j})$ with $\sum_{j=1}^{n}k_{j}$ being an
odd integer contains only product of monomials $\prod_{j=1}^{n}x_{j}^{s_{j}}$
of odd order only. But by the Isserlis Theorem we have $E\prod_{j=1}^{n}%
X_{j}^{s_{j}}\allowbreak=\allowbreak0$ and consequently $E\prod_{j=1}%
^{n}H_{s_{j}}\left(  X_{j}\right)  \allowbreak=\allowbreak0$ if $\sum
_{j=1}^{n}s_{j}\allowbreak$ is odd.
\end{proof}

Let us introduce some auxiliary sequences of numbers and functions that will
be of use in the squeal.

1. The sequence of polynomials in $z$ and entries of the matrix
$\mathbf{\Sigma,}$ defined by the relationship
\[
h_{k_{1}\ldots,k_{n-1}}(\mathbf{\Sigma,}z\mathbf{)\allowbreak=\allowbreak
}E\left(  \prod_{j=1}^{n-1}H_{k_{j}}(X_{j})|X_{n}=z\right)  .
\]

2. The sequence of polynomials of the entries of the matrix $\mathbf{\Sigma
,}$
\[
C_{j_{1},\ldots,j_{k}}^{(k)}(\mathbf{\Sigma)\allowbreak=\allowbreak}%
E\prod_{s=1}^{k}H_{j_{s}}(X_{s}).
\]
Let us notice that if $\mathbf{\Sigma}$ has $1^{\prime}s$ on its diagonal
then
\begin{align*}
C_{j_{1},\ldots,j_{k}}^{(k)}(\mathbf{\Sigma)}  &  \mathbf{=}E\left(
E(\prod_{j=1}^{k-1}H_{j_{s}}(X_{s})|X_{k})H_{j_{k}}(X_{k})\right) \\
&  =\int h_{j_{1}\ldots j_{k-1}}(\mathbf{\Sigma,}z)H_{j_{k}}(z)f_{N}(x)dx
\end{align*}

We have the following general result.

\begin{theorem}
\label{COND}Let $n-$dimensional random vector be normal with $N\left(
\mathbf{0},\mathbf{\Sigma}\right)  ,$ where $\mathbf{\Sigma\allowbreak
=\allowbreak\lbrack}\rho_{j.k}]$ with $\rho_{j,j}\allowbreak=\allowbreak1.$

By $\mathbf{\rho}$ let us denote the $n-1$- dimensional vector with entries
$\left(  \rho_{1,n},\ldots,\rho_{n-1,n}\right)  $ and by $\mathbf{\Sigma}_{c}$
the $(n-1)\times(n-1)$ matrix defined by $[\rho_{k,j}-\rho_{k,n}\rho
_{j,n}]_{j,k=1,\ldots,n-1}$. Notice that the diagonal of this matrix takes on
values $1-\rho_{i,n}^{2}$ for $i\allowbreak=\allowbreak1,\ldots,n-1.$ Let us
denote for simplicity
\begin{equation}
\hat{C}_{k_{1},\ldots k_{n-1}}\allowbreak=\allowbreak C_{k_{1},\ldots,k_{n-1}%
}^{(n-1)}(\mathbf{D\Sigma}_{c}\mathbf{D})\prod_{j=1}^{n-1}\left(  \sqrt
{1-\rho_{j,n}^{2}}\right)  ^{k_{j}}, \label{CC}%
\end{equation}
for $k_{j}\geq0,$ $j=1,\ldots,n-1$, where $\mathbf{D}$ is an $(n-1)\times
(n-1)$ diagonal matrix with $1/\sqrt{1-\rho_{j,n}^{2}}$ as an $j-th$ element
of the diagonal.

Then the conditional expectations%
\[
h_{k_{1}\ldots,k_{n-1}}(\mathbf{\Sigma,}z\mathbf{)\allowbreak=\allowbreak
}E\left(  \prod_{j=1}^{n-1}H_{k_{j}}(X_{j})|X_{n}=z\right)
\]
are equal to
\begin{equation}
h_{k_{1}\ldots,k_{n-1}}(\mathbf{\Sigma,}z\mathbf{)\allowbreak}=\sum_{s_{1}%
=0}^{k_{1}}\ldots\sum_{s_{n-1}=0}^{k_{n-1}}\hat{C}_{s_{1},\ldots s_{n-1}}%
\prod_{j=1}^{n-1}\binom{k_{j}}{s_{j}}\rho_{j,n}^{k_{j}-s_{j}}H_{k_{j}-s_{j}%
}(z), \label{E|z}%
\end{equation}
Moreover, polynomials $h_{k_{1}\ldots,k_{n-1}}$ satisfy the following system
of equations:%
\begin{equation}
\sum_{s_{1}=0}^{k_{1}}\ldots\sum_{s_{n-1}=0}^{k_{n-1}}h_{s_{1}\ldots,s_{n-1}%
}(\mathbf{\Sigma,}z\mathbf{)}\prod_{j=1}^{n-1}\binom{k_{j}}{s_{j}}\left(
i\rho_{j,n}\right)  ^{k_{j}-s_{j}}H_{k_{j}-s_{j}}(iz)=\hat{C}_{k_{1}%
,\ldots,k_{n-1}}, \label{CMom}%
\end{equation}

\end{theorem}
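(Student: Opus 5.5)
By the Proposition following (\ref{margin}), conditionally on $X_{n}=z$ the vector $(X_{1},\ldots,X_{n-1})^{T}$ is distributed as $\mathbf{\rho}z+\mathbf{Y}$. Here $\mathbf{Y}\sim N(\mathbf{0},\mathbf{\Sigma}_{c})$ does not depend on $z$. Put $\sigma_{j}=\sqrt{1-\rho_{j,n}^{2}}$ and $U_{j}=Y_{j}/\sigma_{j}$. Then $\mathbf{U}=\mathbf{D}\mathbf{Y}\sim N(\mathbf{0},\mathbf{D\Sigma}_{c}\mathbf{D})$ by (\ref{afine}), and this matrix has ones on its diagonal. Hence each $U_{j}$ is $N(0,1)$, and $C^{(n-1)}_{s_{1},\ldots,s_{n-1}}(\mathbf{D\Sigma}_{c}\mathbf{D})=E\prod_{j}H_{s_{j}}(U_{j})$. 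The whole proof therefore reduces to expanding each $H_{k_{j}}(\rho_{j,n}z+\sigma_{j}U_{j})$ in the separate variables $z$ and $U_{j}$ and then taking the expectation over $\mathbf{U}$.

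For the expansion I will use the umbral identity (\ref{H(sum)}) with $a=\rho_{j,n}$, $b=\sigma_{j}$, so that $a^{2}+b^{2}=1$. With $y=z$ and $x=U_{j}$ it gives
\[
H_{k_{j}}(\rho_{j,n}z+\sigma_{j}U_{j})=\sum_{s_{j}=0}^{k_{j}}\binom{k_{j}}{s_{j}}\rho_{j,n}^{k_{j}-s_{j}}H_{k_{j}-s_{j}}(z)\,\sigma_{j}^{s_{j}}H_{s_{j}}(U_{j}).
\]
Next I multiply these identities over $j=1,\ldots,n-1$ and take $E(\cdot\,|X_{n}=z)$, which here is just expectation over $\mathbf{U}$. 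This gives the factor $E\prod_{j}H_{s_{j}}(U_{j})\prod_{j}\sigma_{j}^{s_{j}}=\hat{C}_{s_{1},\ldots,s_{n-1}}$, which is exactly (\ref{E|z}). The only point needing care is that $\mathbf{Y}$ is independent of $X_{n}$ and that the conditional law is exactly this shifted normal; both follow from (\ref{margin}).

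For (\ref{CMom}) I will invert (\ref{E|z}) using the convolution identity (\ref{splotH}). A clean way to organize this is through exponential generating functions in $t_{1},\ldots,t_{n-1}$. Since $\sum_{k}H_{k}(x)t^{k}/k!=\exp(xt-t^{2}/2)$, the identity (\ref{E|z}) says that the generating function of the $h$'s equals the generating function of the $\hat{C}$'s multiplied by $\prod_{j}\exp(\rho_{j,n}zt_{j}-\rho_{j,n}^{2}t_{j}^{2}/2)$. The generating function of $(i\rho_{j,n})^{k}H_{k}(iz)$ is $\exp(-\rho_{j,n}zt_{j}+\rho_{j,n}^{2}t_{j}^{2}/2)$, as in the proof of Lemma \ref{Her} j), and it is the reciprocal of that factor. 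Multiplying both sides by this reciprocal and comparing the coefficients of $\prod_{j}t_{j}^{k_{j}}/k_{j}!$ yields (\ref{CMom}).

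Alternatively, I can argue directly with finite sums. Substitute (\ref{E|z}) into the left side of (\ref{CMom}), exchange the order of summation, and use $\binom{k}{s}\binom{s}{r}=\binom{k}{r}\binom{k-r}{s-r}$. The inner sum is then (\ref{splotH}) with $\rho_{j,n}z$ in place of $z$, in its homogeneous form $\sum_{m}\binom{N}{m}\rho^{m}H_{m}(z)(i\rho)^{N-m}H_{N-m}(iz)=\delta_{0,N}$, obtained by rescaling $t\mapsto\rho t$ in the generating functions. This collapses the inner sum to $\hat{C}_{k_{1},\ldots,k_{n-1}}$.

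I expect the main obstacle to be purely bookkeeping: matching the index conventions and the powers of $\rho_{j,n}$ and $i$ in the inversion step. The generating-function route avoids most of this, so I would present that one.
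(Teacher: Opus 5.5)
Your argument is correct, but it runs in the opposite direction from the paper's.

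The paper expands the standardized polynomial $H_{k_j}\bigl((X_j-\rho_{j,n}z)/\sqrt{1-\rho_{j,n}^2}\bigr)$ in the $H_s(X_j)$ via (\ref{Hn2}), whose coefficients are the complex-argument polynomials $(i\rho_{j,n})^{k_j-s}H_{k_j-s}(iz)$. Conditioning on $X_n=z$ therefore produces the system (\ref{CMom}) first. The paper then obtains (\ref{E|z}) only by substituting it into (\ref{CMom}) and collapsing with (\ref{splotH}). That step tacitly uses that (\ref{CMom}) is unitriangular: the $s=k$ term has coefficient $1$, so the system determines the $h$'s uniquely by induction on $\sum_j k_j$.

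You instead expand $H_{k_j}(\rho_{j,n}z+\sigma_jU_j)$ in the $H_s(U_j)$ with the real umbral identity (\ref{H(sum)}). This gives (\ref{E|z}) at once, with no uniqueness step, and you then recover (\ref{CMom}) by inversion. Your generating-function version also shows why the two statements are equivalent: the kernels $\exp(\rho zt-\rho^2t^2/2)$ and $\exp(-\rho zt+\rho^2t^2/2)$ are reciprocal, and working with formal power series avoids any convergence issue. Your finite-sum alternative is literally the paper's verification computation, read in the other logical direction. The paper's order suits its narrative of first producing a linear system for the conditional moments and then solving it; yours is shorter and logically tighter.

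One small slip: the collapsed inner sum is not (\ref{splotH}) ``with $\rho_{j,n}z$ in place of $z$''. After the binomial identity, the whole power $\rho_{j,n}^{k-r}$ factors out of the inner sum and (\ref{splotH}) applies verbatim. Your displayed homogeneous form is just (\ref{splotH}) multiplied by $\rho^N$, so the conclusion is unaffected.
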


\begin{proof}
First of all, recall that the conditional distribution of $\left(
X_{1},\ldots,X_{n-1}\right)  $ given $X_{n}\allowbreak=\allowbreak z$ has
normal distribution $N\left(  \mathbf{\rho}z,\mathbf{\Sigma}_{c}\right)  .$
Secondly, recall that following (\ref{afine}) is a random vector with the
following
\[
\left(  \frac{x_{1}-z\rho_{1,n}}{\sqrt{1-\rho_{1n}^{2}}},\ldots,\frac
{x_{n-1}-z\rho_{n-1,n}}{\sqrt{1-\rho_{n-1,n}^{2}}}\right)  ,
\]
coordinates, has $N\left(  \mathbf{0,D\Sigma}_{c}\mathbf{D}\right)  $
distribution. Consequently, we deduce that
\[
E\prod_{j=1}^{n-1}H_{k_{j}}\left(  \frac{X_{j}-\rho_{j,n}z}{\sqrt{1-\rho
_{j,n}^{2}}}\right)  =C_{k_{1},\ldots,k_{n-1}}^{(n-1)}(\mathbf{D\Sigma}%
_{c}\mathbf{D}).
\]
Thirdly, recall that for each $j\allowbreak=\allowbreak1,\ldots,n-1$ we have%
\[
H_{k_{j}}\left(  \frac{X_{j}-\rho_{j,n}z}{\sqrt{1-\rho_{j,n}^{2}}}\right)
=\frac{1}{\left(  \sqrt{1-\rho_{j,n}^{2}}\right)  ^{k_{j}}}\sum_{s=1}^{k_{j}%
}\binom{k_{j}}{s}H_{s}(X_{j})\left(  i\rho_{j,n}\right)  ^{k_{j}-s}H_{k_{j}%
-s}\left(  iz\right)  .
\]
Thus, we have%
\begin{align*}
&  \sum_{s_{1}=0}^{k_{1}}\ldots\sum_{s_{n-1}=0}^{k_{n-1}}h_{k_{1}%
\ldots,k_{n-1}}(\mathbf{\Sigma,}z\mathbf{)}\prod_{j=1}^{n-1}\binom{k_{j}%
}{s_{j}}\left(  i\rho_{j,n}\right)  ^{k_{j}-s_{j}}H_{k_{j}-s_{j}}(iz)\\
&  =C_{k_{1},\ldots,k_{n-1}}(\mathbf{D\Sigma}_{c}\mathbf{D})\prod_{j=1}%
^{n-1}\left(  \sqrt{1-\rho_{j,n}^{2}}\right)  ^{k_{j}}=\hat{C}_{k_{1}%
,\ldots,k_{n-1}}.
\end{align*}
Thus, we got the system of equations (\ref{CMom}). We will show that functions
(\ref{E|z}) satisfy this system of equations. Namely, we get after inserting
(\ref{E|z}) into (\ref{CMom})
\begin{align*}
&  \sum_{s_{1}=0}^{k_{1}}\ldots\sum_{s_{n-1}=0}^{k_{n-1}}\sum_{m_{1}}^{s_{1}%
}\ldots\sum_{m_{n-1}}^{s_{n-1}}\hat{C}_{m_{1},\ldots m_{n-1}}\\
&  \times\prod_{j=1}^{n-1}\binom{s_{j}}{m_{j}}\left(  \rho_{jn}\right)
^{s_{j}-m_{j}}H_{s_{j}-m_{j}}\prod_{j=1}^{n-1}\binom{k_{j}}{s_{j}}\left(
i\rho_{j,n}\right)  ^{k_{j}-s_{j}}H_{k_{j}-s_{j}}(iz)\\
&  =\sum_{m_{1}=0}^{k_{1}}\ldots\sum_{m_{n-1}=0}^{k_{n-1}}\hat{C}%
_{m_{1},\ldots m_{n-1}}\prod_{j=1}^{n-1}\binom{k_{j}}{m_{j}}\left(  \rho
_{jn}\right)  ^{k_{j}-m_{j}}\\
&  \times\sum_{s_{1}=m_{1}}^{k_{1}}\ldots\sum_{s_{n-1}=m_{n-1}}^{k_{n-1}}%
\prod_{j=1}^{n-1}\binom{k_{j}-m_{j}}{s_{j}-m_{j}}H_{s_{j}-m_{j}}(z)\left(
i\right)  ^{k_{j}-s_{j}}H_{k_{j}-s_{j}}(iz)\\
&  =\sum_{m_{1}=0}^{k_{1}}\ldots\sum_{m_{n-1}=0}^{k_{n-1}}\hat{C}%
_{m_{1},\ldots m_{n-1}}\prod_{j=1}^{n-1}\binom{k_{j}}{m_{j}}\left(  \rho
_{jn}\right)  ^{k_{j}-m_{j}}\\
&  \times\sum_{t_{1}=0}^{k_{1}-m_{1}}\ldots\sum_{t_{n-1}=0}^{k_{n-1}-m_{n-1}%
}\prod_{j=1}^{n-1}\binom{k_{j}-m_{j}}{t_{j}}H_{t_{j}}(z)\left(  i\right)
^{k_{j}-m_{j}-t_{j}{}_{j}}H_{k_{j}-m_{j}-t_{j}}(iz)\\
&  =\sum_{m_{1}=0}^{k_{1}}\ldots\sum_{m_{n-1}=0}^{k_{n-1}}\hat{C}%
_{m_{1},\ldots m_{n-1}}\prod_{j=1}^{n-1}\binom{k_{j}}{m_{j}}\left(  \rho
_{jn}\right)  ^{k_{j}-m_{j}}\\
&  \times\prod_{j=1}^{n-1}\sum_{t_{j}=0}^{k_{j}-m_{j}}\binom{k_{j}-m_{j}%
}{t_{j}}H_{t_{j}}(z)\left(  i\right)  ^{k_{j}-m_{j}-t_{j}{}_{j}}H_{k_{j}%
-m_{j}-t_{j}}(iz).
\end{align*}
Now, recall that $\sum_{s=0}^{n}\binom{n}{s}H_{s}(z)(i)^{n-s}H_{n-s}%
(iz)=\delta_{0,n},$ hence the whole expression equals to $\hat{C}%
_{k_{1},\ldots k_{n-1}}.$
\end{proof}

\begin{remark}
The quantity $\hat{C}_{s_{1},\ldots s_{n-1}}$ defined by the formula
(\ref{CC}) is a polynomial in coefficients $\rho_{jk}$. It does not contain
$\sqrt{1-\rho_{j,n}^{2}},$ $j\allowbreak=\allowbreak1,\ldots,n$. This is so,
since we have
\begin{equation}
H_{n}(x/\sqrt{1-\rho^{2}})\allowbreak=\allowbreak\frac{1}{(1-\rho^{2})^{n/2}%
}\sum_{j=0}^{\left\lfloor n/2\right\rfloor }\frac{n!\rho^{2j}}{2^{j}%
j!(n-2j)!}H_{n-2j}(x), \label{pom1}%
\end{equation}
by (\ref{MT}). Now, recall formula (\ref{CC}). If we define $\mathbf{Y}$ as
the random vector having normal distribution $N(\mathbf{0,\Sigma}_{c}),$ then
vector $\mathbf{Z\allowbreak=\allowbreak}\left(  Y_{1}/\sqrt{1-\rho_{1,n}^{2}%
},\ldots,Y_{n-1}/\sqrt{1-\rho_{1,n}^{2}}\right)  ^{T},$ has normal $N\left(
\mathbf{0,D\Sigma}_{c}\mathbf{D}\right)  $ distribution . Hence,
\[
C_{k_{1},\ldots,k_{n-1}}^{(n-1)}(\mathbf{D\Sigma}_{c}\mathbf{D})=E\prod
_{j=1}^{n-1}H_{k_{j}}(Y_{j}/\sqrt{1-\rho_{j,n}^{2}}).
\]
But due to formula (\ref{pom1}) we see that
\begin{align*}
&  C_{k_{1},\ldots,k_{n-1}}^{(n-1)}(\mathbf{D\Sigma}_{c}\mathbf{D})\prod
_{j=1}^{n-1}\left(  \sqrt{1-\rho_{j,n}^{2}}\right)  ^{k_{j}}\\
&  =\sum_{s_{1}=0}^{\left\lfloor k_{1}/2\right\rfloor }\ldots\sum_{s_{n-1}%
=0}^{\left\lfloor k_{n}/2\right\rfloor }C_{k_{1}-2s_{1}\ldots,k_{n-1}%
-2s_{n-1}}^{(n-1)}(\mathbf{\Sigma}_{c})\prod_{j=1}^{n-1}\frac{k_{j}!\rho
_{j,n}^{2s_{j}}}{2^{s_{j}}s_{j}!(k_{j}-2s_{j})!},
\end{align*}
Hence, it contains only integer powers of coefficients $\rho_{jk}.$
\end{remark}

\begin{remark}
Notice that the coefficients of the expansion (\ref{rozkl}) are now given by
the iterative relationship. Namely, assuming that the random vector
$\mathbf{X\allowbreak=\allowbreak}\left(  X_{1},\ldots,X_{n}\right)  ^{T}\sim
N(0,\mathbf{\Sigma),}$ where $\mathbf{\Sigma}$ is some positive definite
$n\times n$ matrix with $1^{\prime}s$ on the diagonal, we have%
\begin{align*}
c_{k_{1},\ldots,k_{n}}  &  =\frac{1}{\prod_{j=1}^{n}k_{j}!}E\prod_{j=1}%
^{n}H_{k_{j}}(X_{j})\\
&  =\frac{1}{k_{n}!}\sum_{s_{1}=0}^{k_{1}}\ldots\sum_{s_{n-1}=0}^{k_{n-1}%
}\frac{r_{k_{1}-s_{1},\ldots,k_{n-1}-s_{n-1},k_{n}}}{\prod_{j=1}^{n-1}%
(k_{j}-s_{j})!}\frac{\hat{C}_{s_{1},\ldots s_{n-1}}}{\prod_{j=1}^{n-1}s_{j}%
!}\prod_{j=1}^{n-1}\rho_{j,n}^{k_{j}-s_{j}},
\end{align*}

Consequently, we have the following form of the K-S formula (recall that it
refers to $N(\mathbf{0,\Sigma)}$ distribution with $\mathbf{\Sigma}$ having
$1^{\prime}s$ on its diagonal)
\begin{align*}
&  \frac{1}{\sqrt{(2\pi)^{n}\det\mathbf{\Sigma}}}\exp\left(  -\mathbf{x}%
^{T}\mathbf{\Sigma}^{-1}\mathbf{x}/2+\mathbf{x}^{T}\mathbf{x/2}\right) \\
&  =\sum_{k_{n}\geq0}\frac{H_{k_{n}}(x_{n})}{k_{n}!}\sum_{k_{n-1}\geq0}%
\ldots\sum_{k_{1}\geq0}\prod_{j=1}^{n-1}H_{k_{j}}(x_{j})\\
&  \times\sum_{s_{1}=0}^{k_{1}}\ldots\sum_{s_{n-1}=0}^{k_{n-1}}\frac
{r_{k_{1}-s_{1},\ldots,k_{n-1}-s_{n-1},k_{n}}}{\prod_{j=1}^{n-1}(k_{j}%
-s_{j})!}\frac{\hat{C}_{s_{1},\ldots s_{n-1}}}{\prod_{j=1}^{n-1}s_{j}!}%
\prod_{j=1}^{n-1}\rho_{j,n}^{k_{j}-s_{j}}\\
&  =\sum_{k_{n}\geq0}\frac{H_{k_{n}}(x_{n})}{k_{n}!}\sum_{s_{n-1}\geq0}%
\ldots\sum_{s_{1}\geq0}\frac{\hat{C}_{s_{1},\ldots s_{n-1}}}{\prod_{j=1}%
^{n-1}s_{j}!}\\
&  \times\sum_{k_{n-1}\geq s_{n-1}}\ldots\sum_{k_{1}\geq s_{1}}\frac
{r_{k_{1}-s_{1},\ldots,k_{n-1}-s_{n-1},k_{n}}}{\prod_{j=1}^{n-1}(k_{j}%
-s_{j})!}\prod_{j=1}^{n-1}\rho_{j,n}^{k_{j}-s_{j}}\prod_{j=1}^{n-1}H_{k_{j}%
}(x_{j})\\
&  =\sum_{k_{n}\geq0}\frac{H_{k_{n}}(x_{n})}{k_{n}!}\sum_{s_{n-1}\geq0}%
\ldots\sum_{s_{1}\geq0}\frac{\hat{C}_{s_{1},\ldots s_{n-1}}}{\prod_{j=1}%
^{n-1}s_{j}!}\\
&  \times\sum_{l_{n-1}\geq0}\ldots\sum_{l_{1}\geq0}r_{l_{1},\ldots
,l_{n-1},k_{n}}\frac{\prod_{j=1}^{n-1}\rho_{j,n}^{l_{j}}}{\prod_{j=1}%
^{n-1}l_{j}!}\prod_{j=1}^{n-1}H_{s_{j}+l_{j}}(x_{j}).
\end{align*}
This is so since both expansions (\ref{rozkl}) and K-S are expansions of the
density of $N(\mathbf{0,\Sigma)}$ in terms of products of Hermite polynomials.
Hence, they must be identical.
\end{remark}

\subsection{Markov case\label{Mark}}

Let us recall that by a finite Markov chain it is understood an ordered
sequence of random variables $X_{1},\ldots,X_{n}$ that have the property that
for every integrable function $g$ and $j\allowbreak=\allowbreak1,\ldots,n-1$
we have almost surely%
\[
E\left(  g\left(  X_{j+1}\right)  |X_{j},\ldots,X_{1}\right)  =E\left(
g(X_{j+1}|X_{j}\right)  .
\]
This means that all information about the chain's future behavior is contained
in the last state. For the details see for example. \cite{Dynkin12}.
Traditionally, the indices are interpreted as time and the values of random
variables $X_{j},$ $j\allowbreak=\allowbreak1,\ldots,n$ are called states (of
the chain). A consequence of this assumption is the fact that the joint
distribution of the chain is equal to a product of the marginal distribution
times a product of the conditional ones. In the case of normal Markov chains
with zero expectations, it means that the conditional density of
$X_{j+1}|X_{j}=y$ is given by the function
\begin{equation}
f_{CN}(x|y,\rho_{j+1j})=\frac{1}{\sqrt{2\pi(1-\rho_{j,j+1}^{2})}}\exp\left(
-\frac{(x-\rho_{j,j+1}y)^{2}}{2(1-\rho_{j,j+1}^{2})}\right)  . \label{MCN}%
\end{equation}
If we assume that the marginal density of $X_{1}$ is $f_{N}$, then the joint
density of the random vector $(X_{1},\ldots,X_{n})^{T}$ is equal to
\[
f_{N}(x_{1})\prod_{j=1}^{n-1}f_{CN}(x_{j+1}|x_{j},\rho_{j,,j+1}).
\]
From this assumption follows also the symmetry of the so-described Markov
chain. Namely, we have also
\[
E\left(  g(X_{j})|X_{j+1},\ldots,X_{n}\right)  =E\left(  g(X_{j}%
)|X_{j}\right)  ,
\]
almost surely.

This fact results in the form of the variance-covariance matrix
$\mathbf{\Sigma\allowbreak=\allowbreak\lbrack}\sigma_{j,k}]_{j,k,1,\ldots,n}$
which now has the following entries : $\sigma_{j,j}\allowbreak=\allowbreak1$,
$j\allowbreak=\allowbreak1,\ldots,n$ and $\sigma_{j,k}\allowbreak
=\allowbreak\prod_{m=j}^{k-1}\rho_{m,m+1}$, for $j<k$. It is easy to notice
that then $\det\mathbf{\Sigma\allowbreak=\allowbreak}\prod_{j=1}^{n-1}%
(1-\rho_{j,j+1}^{2})$ and that the inverse matrix , i.e., $\mathbf{\Sigma
\allowbreak}^{-1}$, is a bidiagonal matrix. Moreover, from the Markov
assumption it follows also that
\[
E\left(  H_{k}(X_{j})|X_{n},\ldots,X_{j+1},X_{j-1},\ldots,X_{1}\right)
=E\left(  H_{k}(X_{j})|X_{j+1},X_{j-1}\right)  ,
\]
almost surely. For the proof and details see \cite{Szab10}. Now following
formulas from the assertion of Theorem2 (i.e., (3.1) and (3.2)) the assertion
ii) of Lemma3 and (4.1) of \cite{SzablAW} we can utter the following statement .

\begin{theorem}
Assuming the sequence of random variables $X_{1},\ldots,X_{n}$ form a Markov
chain with zero expectations, $f_{N}$ as its marginal distributions and
(\ref{MCN}) as conditional distribution of $X_{j+1}|X_{j}=y$ the conditional
expectoration $C_{k}(y,z,\rho_{j-1,j},\rho_{j,j+1})=E\left(  H_{k}%
(X_{j})|X_{n},\ldots,X_{j+1}=z,X_{j-1}=y,\ldots,X_{1}\right)  $ takes one of
the following equivalent forms%
\begin{gather}
C_{k}(y,z,\rho_{j-1,j},\rho_{j,j+1})=\sum_{s=0}^{k}\binom{k}{s}\rho
_{j-1,j}^{k-s}\left(  1-\rho_{j-1,j}^{2}\right)  ^{s}\rho_{j,j+1}^{s}%
H_{k-s}(y)\label{CH1}\\
\times H_{s}\left(  \frac{z-\rho_{j-1,j}\rho_{j,j+1}y}{\sqrt{1-\left(
\rho_{j-1,j}\rho_{j,j+1}\right)  ^{2}}}\right)  /\left(  1-\left(
\rho_{j-1,j}\rho_{j,j+1}\right)  ^{2}\right)  ^{s/2},\nonumber
\end{gather}%
\begin{gather}
C_{k}(y,z,\rho_{j-1,j},\rho_{j,j+1})=\frac{1}{\left(  1-\left(  \rho
_{j-1,j}\rho_{j,j+1}\right)  ^{2}\right)  ^{k}}\sum_{s=0}^{\left\lfloor
k/2\right\rfloor }(-1)^{s}\binom{k}{2s}\binom{2s}{s}s!\nonumber\\
\times\left(  \rho_{j-1,j}\rho_{j,j+1}\right)  ^{2s}\left(  \left(
1-\rho_{j-1,j}^{2}\right)  \left(  1-\rho_{j,j+1}^{2}\right)  \right)  ^{s}\\
\times\sum_{m=0}^{k-2s}\binom{k-2s}{m}\left(  1-\rho_{j-1,j}^{2}\right)
^{m}\left(  1-\rho_{j,j+1}^{2}\right)  ^{k-2s-m}\rho_{j-1,j}^{k-2s-m}%
\rho_{j,j+1}^{m}H_{m}(z)H_{k-2s-m}(y).\nonumber
\end{gather}%
\begin{gather}
C_{k}(y,z,\rho_{j-1,j},\rho_{j,j+1})=\left(  \frac{\rho_{j-1,j}^{2}%
+\rho_{j,j+1}^{2}-2\rho_{j-1,j}^{2}\rho_{j,j+1}^{2}}{1-\rho_{j-1,j}^{2}%
\rho_{j,j+1}^{2}}\right)  ^{k/2}\label{CH3}\\
\times H_{k}\left(  \frac{y\rho_{j-1,j}\left(  1-\rho_{j,j+1}^{2}\right)
+z\rho_{j,j+1}\left(  1-\rho_{j-1,j}^{2}\right)  }{\sqrt{\left(
1-\rho_{j-1,j}^{2}\rho_{j,j+1}^{2}\right)  \left(  \rho_{j-1,j}^{2}%
+\rho_{j,j+1}^{2}-2\rho_{j-1,j}^{2}\rho_{j,j+1}^{2}\right)  }}\right)
.\nonumber
\end{gather}

\end{theorem}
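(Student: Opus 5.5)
The plan is to reduce everything to a single one-dimensional Gaussian integral. By the Markov property recalled just before the statement, conditioning $X_j$ on all the other variables is the same as conditioning on $X_{j-1}=y$ and $X_{j+1}=z$ only. So it suffices to work with the three-dimensional vector $(X_{j-1},X_j,X_{j+1})^T$. Write $a=\rho_{j-1,j}$ and $b=\rho_{j,j+1}$. Its covariance matrix has unit diagonal, with $\sigma_{j-1,j}=a$, $\sigma_{j,j+1}=b$ and $\sigma_{j-1,j+1}=ab$.

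First I will apply (\ref{margin}) with $\mathbf{X}_1=X_j$ and $\mathbf{X}_2=(X_{j-1},X_{j+1})^T$. Inverting the $2\times 2$ matrix $\left[\begin{array}{cc}1&ab\\ab&1\end{array}\right]$ gives the conditional law $N(\mu,\sigma^2)$ with
\[
\mu=\frac{a(1-b^{2})y+b(1-a^{2})z}{1-a^{2}b^{2}},\qquad \sigma^{2}=\frac{(1-a^{2})(1-b^{2})}{1-a^{2}b^{2}}.
\]
Next I will use assertion 2 of Proposition \ref{AHer}, whose free parameter (called $a$ there) I rename $c$ to avoid a clash. Choosing $c$ with $c^2=1-\sigma^2$ kills every term with $s\geq 1$, exactly as in the derivation of (\ref{Cond}). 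This leaves $E(H_k(X_j)\mid y,z)=c^kH_k(\mu/c)$.

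A direct computation gives
\[
1-\sigma^{2}=\frac{a^{2}+b^{2}-2a^{2}b^{2}}{1-a^{2}b^{2}}.
\]
Substituting this $c$ and the above $\mu$ into $c^kH_k(\mu/c)$ yields (\ref{CH3}) verbatim. This is the core of the proof.

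The two remaining forms are algebraic rewritings of $c^kH_k(\mu/c)$. For (\ref{CH1}) I will write $\mu$ as $ay+\lambda\,(z-aby)/\sqrt{1-a^2b^2}$ with a suitable $\lambda$, namely $\lambda=b(1-a^2)/\sqrt{1-a^2b^2}$. I will then apply the umbral identity (\ref{H(sum)}) to split $H_k$ into $H_{k-s}(y)$ times $H_s$ of the standardized variable $(z-aby)/\sqrt{1-a^2b^2}$. The coefficients should match $\binom{k}{s}a^{k-s}(1-a^2)^s b^s(1-a^2b^2)^{-s/2}$, because $a^2+\lambda^2=c^2$.

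For the second form I will instead split $\mu$ into its $y$-part and its $z$-part and apply (\ref{H(sum)}) with arguments $y$ and $z$. Since $a^2(1-b^2)^2+b^2(1-a^2)^2\neq c^2(1-a^2b^2)^2$, the rescaling mismatch must be absorbed by the multiplication theorem (\ref{MT}). That step produces the factor $(-1)^s\binom{k}{2s}\binom{2s}{s}s!\,(ab)^{2s}((1-a^2)(1-b^2))^s$ together with the overall factor $(1-a^2b^2)^{-k}$.

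The main obstacle is this last bookkeeping: identifying the exact scaling constant in (\ref{MT}) and checking that the double sum collapses to the stated coefficients. I will verify it first on $k=1,2$, and if needed by comparing the generating functions $\sum_k \frac{t^k}{k!}(\cdot)$. This comparison is easy, since $\sum_k \frac{t^k}{k!}c^kH_k(\mu/c)=\exp(\mu t-c^2t^2/2)$.
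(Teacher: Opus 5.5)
Your proof is correct, but it does not follow the paper's route. The paper gives no computation for this theorem; it obtains the three forms by setting $q=1$ in the $q$-Hermite results of \cite{SzablAW}. You instead stay inside the paper's Gaussian toolkit:
\begin{itemize}
\item the two-sided Markov property reduces the conditioning to $(X_{j-1},X_{j+1})$, with $\sigma_{j-1,j+1}=ab$;
\item (\ref{margin}) gives the conditional law $N(\mu,\sigma^2)$ with the $\mu,\sigma^2$ you wrote;
\item assertion 2 of Proposition \ref{AHer} with $c^2=1-\sigma^2$ gives (\ref{CH3});
\item (\ref{CH1}) then follows from one application of (\ref{H(sum)}), because $a^2+\lambda^2=c^2$ holds exactly.
\end{itemize}

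The bookkeeping you flag for the middle form does close. Put $\alpha=a(1-b^2)/(1-a^2b^2)$ and $\beta=b(1-a^2)/(1-a^2b^2)$. Then $\alpha^2+\beta^2-c^2=-2D$ with $D=a^2b^2(1-a^2)(1-b^2)/(1-a^2b^2)^2$. Hence $\sum_k\frac{t^k}{k!}C_k=e^{\alpha yt-\alpha^2t^2/2}\,e^{\beta zt-\beta^2t^2/2}\,e^{-Dt^2}$. Since $\binom{k}{2s}\binom{2s}{s}s!=k!/(s!(k-2s)!)$, the coefficient of $t^k/k!$ is exactly the stated double sum, with overall factor $(1-a^2b^2)^{-k}$.

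In fact the middle form is just assertion 3 of the three-dimensional theorem in Section \ref{23}, evaluated at $\rho_{12}=a$, $\rho_{13}=b$, $\rho_{23}=ab$. You may therefore cite it instead of redoing the step with (\ref{MT}).

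What each route buys:
\begin{itemize}
\item Your route gives a self-contained, checkable proof. It also explains why the three expressions agree: (\ref{CH3}) is the closed form, and the other two are its umbral splittings along $y$ and $(z-aby)/\sqrt{1-a^2b^2}$, respectively along $y$ and $z$.
\item The paper's route identifies these formulas as the $q=1$ case of identities for $q$-Gaussian Markov chains, where no Gaussian conditioning argument is available. The cost is that all verification is left to the cited source.
\end{itemize}
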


Now, let us confine our considerations to $2-$ and $3-$dimensional normal
distributions. By studying these cases, we will learn how to use Theorem
\ref{COND} to calculate conditional moments $h_{k_{1}\ldots,k_{n-1}%
}(\mathbf{\Sigma,}z\mathbf{)\allowbreak}$ iteratively.

\section{Particular cases $2$ and $3$ and some general observations\label{23}}

We start with the case $n\allowbreak=\allowbreak2.$ Let us assume that the
random vector $(X,Y),$ has $N\left(  \mathbf{0,}\left[
\begin{array}
[c]{cc}%
1 & \rho\\
\rho & 1
\end{array}
\right]  \right)  $ distribution . It is a common knowledge that then%
\[
E(H_{n}(X)|Y=y)=\rho^{n}H_{n}(y).
\]
Let us notice that the formula (\ref{E|z}) for $n\allowbreak=\allowbreak2$ reads%

\[
\sum_{s=0}^{n}\binom{n}{s}\hat{C}_{s}\rho^{n-s}H_{n-s}(z)=h_{n}(z),
\]
where $\hat{C}_{n}\allowbreak=\allowbreak EH_{n}(X)$, with $X\sim f_{N}(x).$
It is obvious that $\hat{C}_{n}\allowbreak=\allowbreak\delta_{0,n}$ . Hence,
$h_{n}(z)\allowbreak=\allowbreak\rho^{n}H_{n}(z).$

Consequently, we have%
\begin{equation}
C_{n,m}(\rho)=EH_{n}(X)H_{m}(Y)=%
\begin{cases}
0\text{,} & \text{if }n\neq m\text{;}\\
n!\rho^{n}\text{,} & \text{if }n=m\text{.}%
\end{cases}
\label{C2}%
\end{equation}
Now notice that the coefficient (\ref{coef}) of the expansion (\ref{rozkl}) is
equal
\[
c_{n,m}=%
\begin{cases}
0\text{,} & \text{if }n\neq m\text{;}\\
\rho^{n}/n!\text{,} & \text{if }n=m\text{.}%
\end{cases}
\]
Thus, for all $x,y\in\mathbb{R},$ we have
\begin{align*}
&  \frac{1}{2\pi\sqrt{1-\rho^{2}}}\exp\left(  -\frac{x^{2}-2\rho xy+y^{2}%
}{2(1-\rho^{2}))}\right) \\
&  =f_{n}(x)f_{n}(y)\sum_{n\geq0}\frac{\rho^{n}}{n!}H_{n}(x)H_{n}(y).
\end{align*}
That is we get famous Mehler formula for free!.

\begin{theorem}
Let $\left(  X,Y,Z\right)  \sim N\left(
\begin{array}
[c]{c}%
0\\
0\\
0
\end{array}
,\left[
\begin{array}
[c]{ccc}%
1 & \rho_{12} & \rho_{13}\\
\rho_{12} & 1 & \rho_{23}\\
\rho_{13} & \rho_{23} & 1
\end{array}
\right]  \right)  ,$ then we have

1. conditional distribution $X|Y,Z$ is also normal%
\[
N\left(  y(\frac{\rho_{12}-\rho_{13}\rho_{23}}{1-\rho_{23}^{2}})+z\left(
\frac{\rho_{13}-\rho_{12}\rho_{23}}{1-\rho_{23}^{2}}\right)  ,\frac
{1-\rho_{12}^{2}-\rho_{13}^{2}-\rho_{23}^{2}+2\rho_{12}\rho_{13}\rho_{23}%
}{1-\rho_{23}^{2}}\right)  ,
\]

2. conditional distribution $%
\genfrac{[}{]}{0pt}{}{X}{Y}%
|Z$ is two-dimensional normal
\[
N\left(
\genfrac{[}{]}{0pt}{}{\rho_{13}z}{\rho_{23}z}%
,\left[
\begin{array}
[c]{cc}%
1-\rho_{13}^{2} & \rho_{12}-\rho_{13}\rho_{23}\\
\rho_{12}-\rho_{13}\rho_{23} & 1-\rho_{23}^{2}%
\end{array}
\right]  \right)  .a.s..
\]
In particular $\ $we have%
\[
E(H_{n}(Y)|Z=z)=\rho_{23}^{2}H_{n}(z).
\]

3. Almost surely we have
\begin{gather*}
E(H_{n}(X)|Y=y,Z=z)\\
=\frac{n!}{\left(  1-\rho_{23}^{2}\right)  ^{n}}\sum_{s=0}^{\left\lfloor
n/2\right\rfloor }\frac{(-1)^{s}\rho_{23}^{s}(\rho_{12}-\rho_{13}\rho
_{23})^{s}(\rho_{13}-\rho_{12}\rho_{23})^{s}}{s!(n-2s)!}\\
\times\sum_{k=0}^{n-2s}\binom{n-2s}{k}\left(  \rho_{12}-\rho_{13}\rho
_{23}\right)  ^{k}\left(  \rho_{13}-\rho_{12}\rho_{23}\right)  ^{n-2s-k}%
H_{k}(y)H_{n-2s-k}(z).
\end{gather*}

4. Almost surely we have
\begin{equation}
E(H_{n}(X)H_{m}(Y)|Z=z)=\sum_{j=0}^{\min(n,m)}\binom{n}{j}\binom{m}{j}%
j!\rho_{12}^{j}\rho_{13}^{n-j}\rho_{23}^{m-j}H_{n+m-2j}(z). \label{EXYz}%
\end{equation}

\end{theorem}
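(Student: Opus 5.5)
Parts 1 and 2 are immediate consequences of (\ref{margin}). For part 1 take $\mathbf{X}_{1}=X$ and $\mathbf{X}_{2}=(Y,Z)^{T}$, so $\mathbf{\Sigma}_{12}=(\rho_{12},\rho_{13})$ and $\mathbf{\Sigma}_{22}=\left[\begin{array}{cc}1&\rho_{23}\\ \rho_{23}&1\end{array}\right]$. Then
\[
\mathbf{\Sigma}_{22}^{-1}=\frac{1}{1-\rho_{23}^{2}}\left[\begin{array}{cc}1&-\rho_{23}\\ -\rho_{23}&1\end{array}\right],
\]
and a direct computation of $\mathbf{\Sigma}_{12}\mathbf{\Sigma}_{22}^{-1}$ and $1-\mathbf{\Sigma}_{12}\mathbf{\Sigma}_{22}^{-1}\mathbf{\Sigma}_{21}$ gives the stated mean coefficients and the variance $\det\mathbf{\Sigma}/(1-\rho_{23}^{2})$. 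Part 2 is the first Proposition applied with $n=3$. The one-dimensional formula $E(H_{n}(Y)|Z=z)=\rho_{23}^{n}H_{n}(z)$ comes from (\ref{Cond}); the exponent $2$ printed in the statement should be $n$.

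For part 3, write $a=(\rho_{12}-\rho_{13}\rho_{23})/(1-\rho_{23}^{2})$, $b=(\rho_{13}-\rho_{12}\rho_{23})/(1-\rho_{23}^{2})$, $m=ay+bz$ and $\sigma^{2}$ for the conditional variance. By Proposition \ref{AHer}.2 with the auxiliary parameter chosen as $\alpha=1$, we have $E(H_{n}(X)|Y,Z)=\sum_{s}\frac{n!}{2^{s}s!(n-2s)!}m^{n-2s}(\sigma^{2}-1)^{s}$. Here
\[
\sigma^{2}-1=-\frac{\rho_{12}^{2}+\rho_{13}^{2}-2\rho_{12}\rho_{13}\rho_{23}}{1-\rho_{23}^{2}}.
\]
The trick is to get rid of the resulting powers of $y$ and $z$ using the umbral identity (\ref{H(sum)}) in the form
\[
(a^{2}+b^{2}+2ab\rho_{23})^{N/2}H_{N}(\cdot)=\sum_{k}\binom{N}{k}a^{k}b^{N-k}H_{k}(y)H_{N-k}(z).
\]
This works because $\sum_{k}\binom{N}{k}a^{k}b^{N-k}H_{k}(y)H_{N-k}(z)$ is the Hermite-type expansion of $(ay+bz)^{N}$ corrected by the variances $a^{2}+b^{2}$ only, and the missing cross term produces the factor $(-\rho_{23}ab)^{s}$. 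Concretely, I would expand $m^{n-2s}=(ay+bz)^{n-2s}$ via (\ref{xnaH}) applied to $y$ and $z$ separately, then collect terms by the total degree $N=k+l$ of $H_{k}(y)H_{l}(z)$. The powers of $(\sigma^{2}-1)$ and of $a^{2}+b^{2}$ combine into $(\sigma^{2}-1+a^{2}+b^{2})^{s}$.

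The key identity to verify is
\[
\sigma^{2}-1+a^{2}+b^{2}=-2ab\rho_{23}.
\]
Indeed $\sigma^{2}-1=-(a^{2}+b^{2}+2ab\rho_{23})$, since $a^{2}+b^{2}+2ab\rho_{23}$ equals $\mathbf{\Sigma}_{12}\mathbf{\Sigma}_{22}^{-1}\mathbf{\Sigma}_{21}$. This yields
\[
E(H_{n}(X)|Y,Z)=\sum_{s}\frac{n!(-\rho_{23}ab)^{s}}{s!(n-2s)!}\sum_{k}\binom{n-2s}{k}a^{k}b^{n-2s-k}H_{k}(y)H_{n-2s-k}(z).
\]
Clearing denominators gives the stated form, up to the normalization $(1-\rho_{23}^{2})^{n}$, which I would check against $n=1,2$. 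The main obstacle is this bookkeeping. An alternative is to verify the formula through generating functions: multiply by $t^{n}/n!$, sum over $n$, and compare with $E(e^{tX-t^{2}/2}|Y,Z)=\exp(tm+t^{2}(\sigma^{2}-1)/2)$. The right-hand side sums to $\exp(tay+tbz-t^{2}(a^{2}+b^{2})/2-t^{2}\rho_{23}ab)$, which agrees because of the same identity. I would use this generating-function check as the actual proof.

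For part 4, I would apply the linearization formula (\ref{Lin}) to... no. Linearization does not apply here, since $H_{n}(X)$ and $H_{m}(Y)$ have different arguments. Instead I use Theorem \ref{COND} with $n=3$. By (\ref{E|z}),
\[
h_{n,m}(z)=\sum_{s,t}\hat{C}_{s,t}\binom{n}{s}\binom{m}{t}\rho_{13}^{n-s}\rho_{23}^{m-t}H_{n-s}(z)H_{m-t}(z),
\]
where $\hat{C}_{s,t}$ comes from (\ref{C2}) applied to $\mathbf{D\Sigma}_{c}\mathbf{D}$. This gives $\hat{C}_{s,t}=\delta_{s,t}\,s!(\rho_{12}-\rho_{13}\rho_{23})^{s}$. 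Substituting, the claim becomes an identity between polynomials in $z$. I would prove it with generating functions: $\sum_{n,m}h_{n,m}\frac{u^{n}v^{m}}{n!m!}=E(e^{uX+vY-u^{2}/2-v^{2}/2}|Z=z)$, which is computable directly from part 2 as $\exp\bigl((u\rho_{13}+v\rho_{23})z+uv(\rho_{12}-\rho_{13}\rho_{23})-\rho_{13}^{2}u^{2}/2-\rho_{23}^{2}v^{2}/2\bigr)$. The right side of (\ref{EXYz}) has generating function $\sum_{j}\frac{(uv\rho_{12})^{j}}{j!}\sum_{p,q}\frac{(u\rho_{13})^{p}(v\rho_{23})^{q}}{p!q!}H_{p+q}(z)$. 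The inner sum equals $\exp\bigl((u\rho_{13}+v\rho_{23})z-(u\rho_{13}+v\rho_{23})^{2}/2\bigr)$. Expanding $-(u\rho_{13}+v\rho_{23})^{2}/2$ produces the cross term $-uv\rho_{13}\rho_{23}$, and together with $e^{uv\rho_{12}}$ this matches the direct computation exactly.
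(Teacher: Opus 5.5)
Your parts 1 and 2 follow the paper's argument, reading the conditional laws off (\ref{margin}). You are also right that the exponent in $E(H_{n}(Y)|Z=z)$ must be $n$, not $2$.

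For parts 3 and 4 your actual proof takes a genuinely different route. You compare exponential generating functions, using $\sum_{n}H_{n}(x)t^{n}/n!=e^{xt-t^{2}/2}$ and the conditional Gaussian moment generating function. The paper instead works coefficient by coefficient:
\begin{itemize}
\item In part 3 it uses the formula of Proposition \ref{AHer}.2 with auxiliary parameter $\sqrt{a^{2}+b^{2}}$ (in your notation $a,b$). Then (\ref{H(sum)}) applies directly to $H_{n-2s}\bigl(m/\sqrt{a^{2}+b^{2}}\bigr)$. The only computation left is $\sigma^{2}-1+a^{2}+b^{2}=-2ab\rho_{23}$, the same key identity you isolate.
\item In part 4 it inserts $\hat{C}_{j,k}=\delta_{j,k}\,j!(\rho_{12}-\rho_{13}\rho_{23})^{j}$ into (\ref{E|z}). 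It then linearizes $H_{n-j}(z)H_{m-j}(z)$ by (\ref{Lin}); linearization does apply at that stage, since both factors now have argument $z$. Finally it reindexes by $s=j+k$ and collapses $\sum_{j}\binom{s}{j}(\rho_{12}-\rho_{13}\rho_{23})^{j}(\rho_{13}\rho_{23})^{s-j}=\rho_{12}^{s}$. Your observation that the cross term $-uv\rho_{13}\rho_{23}$ cancels against $uv(\rho_{12}-\rho_{13}\rho_{23})$ is exactly this binomial collapse in generating-function form.
\end{itemize}
The paper's route showcases the iterative machinery of Theorem \ref{COND}, which is the point of the paper. Yours is shorter and needs no auxiliary scaling or index bookkeeping. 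In part 4 it does not need Theorem \ref{COND} at all, because you compute the generating function of $h_{n,m}$ straight from part 2, so the $\hat{C}_{s,t}$ detour can be deleted.

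Your route does need two justifications that you should state. First, conditional expectation may be taken termwise. This follows by dominated convergence from $|H_{n}(x)|\le E(|x|+|W|)^{n}$ with $W\sim N(0,1)$, which gives $\sum_{n}|t|^{n}|H_{n}(x)|/n!\le 2e^{|t||x|+t^{2}/2}$. Second, you compare coefficients of entire functions of $t$ (respectively of $u,v$).

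There are three slips, none of which affects the generating-function proof:
\begin{itemize}
\item Proposition \ref{AHer}.2 with auxiliary parameter $1$ gives $\sum_{s}\frac{n!}{2^{s}s!(n-2s)!}\sigma^{2s}H_{n-2s}(m)$, not the monomial formula you quote. The monomial formula is the intermediate step in the proof of that proposition, or equivalently its limit as the auxiliary parameter tends to $0$.
\item Your displayed ``form'' of the umbral identity, with prefactor $(a^{2}+b^{2}+2ab\rho_{23})^{N/2}$, is false. (\ref{H(sum)}) is a polynomial identity in $y,z$ with prefactor $(a^{2}+b^{2})^{N/2}$; $\rho_{23}$ enters only through $\sigma^{2}$.
\item The factor $(1-\rho_{23}^{2})^{-n}$ needs no check on $n=1,2$. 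Each term $(ab)^{s}a^{k}b^{n-2s-k}$ has total degree $n$ in $a,b$, and each of $a,b$ carries exactly one factor $(1-\rho_{23}^{2})^{-1}$.
\end{itemize}
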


\begin{proof}
To get 1. we use (\ref{margin}) with $\mathbf{\mu}_{1}\allowbreak
=\allowbreak0,\mathbf{\mu}_{2}\allowbreak=\allowbreak0,$ $\mathbf{\Sigma}%
_{11}\allowbreak=\allowbreak1,$ $\mathbf{\Sigma}_{12}\allowbreak
=\allowbreak\left[
\begin{array}
[c]{cc}%
\rho_{12} & \rho_{13}%
\end{array}
\right]  ,$ $\mathbf{\Sigma}_{22}\allowbreak=\allowbreak\left[
\begin{array}
[c]{cc}%
1 & \rho_{23}\\
\rho_{23} & 1
\end{array}
\right]  $. To get 2. we use also (\ref{margin}), this time with that
$\mathbf{\mu}_{1}\allowbreak=\allowbreak0,\mathbf{\mu}_{2}\allowbreak
=\allowbreak0,$ $\mathbf{\Sigma}_{11}\allowbreak=\allowbreak\left[
\begin{array}
[c]{cc}%
1 & \rho_{12}\\
\rho_{12} & 1
\end{array}
\right]  1,$ $\mathbf{\Sigma}_{12}\allowbreak=\allowbreak\left[
\begin{array}
[c]{c}%
\rho_{13}\\
\rho_{23}%
\end{array}
\right]  ,$ $\mathbf{\Sigma}_{22}\allowbreak=1.$

To get 3 we use results of the auxiliary Lemma \ref{Her} with%
\[
m\allowbreak=\allowbreak y(\frac{\rho_{12}-\rho_{13}\rho_{23}}{1-\rho_{23}%
^{2}})+z\left(  \frac{\rho_{13}-\rho_{12}\rho_{23}}{1-\rho_{23}^{2}}\right)
\]
and
\[
\sigma^{2}\allowbreak=\allowbreak\frac{1-\rho_{12}^{2}-\rho_{13}^{2}-\rho
_{23}^{2}+2\rho_{12}\rho_{13}\rho_{23}}{1-\rho_{23}}.
\]
Let us select
\begin{align*}
a\allowbreak &  =\allowbreak\sqrt{\left(  (\frac{\rho_{12}-\rho_{13}\rho_{23}%
}{1-\rho_{23}^{2}}\right)  ^{2}+\left(  \frac{\rho_{13}-\rho_{12}\rho_{23}%
}{1-\rho_{23}^{2}}\right)  ^{2}}\allowbreak\\
&  =\allowbreak\frac{\sqrt{(\rho_{13}^{2}+\rho_{12}^{2})(1+\rho_{23}%
^{2})-4\rho_{12}\rho_{13}\rho_{23}}}{\left(  1-\rho_{23}^{2}\right)  }%
\end{align*}
and consequently
\[
\sigma^{2}-1+a^{2}\allowbreak=\allowbreak\frac{-2\rho_{23}(\rho_{12}-\rho
_{13}\rho_{23})(\rho_{13}-\rho_{12}\rho_{23})}{(1-\rho_{23}^{2})^{2}}.
\]

We get further
\begin{gather*}
E(H_{n}(X)|Y=y,Z=z)\\
=\sum_{s=0}^{\left\lfloor n/2\right\rfloor }\frac{n!a^{n-2s}}{2^{s}%
(n-2s)!s!}H_{n-2s}(m/a)(\sigma^{2}+a^{2}-1)^{s}\\
=\sum_{s=0}^{\left\lfloor n/2\right\rfloor }\frac{n!a^{n-2s}(-1)^{s}\rho
_{23}^{s}(\rho_{12}-\rho_{13}\rho_{23})^{s}(\rho_{13}-\rho_{12}\rho_{23})^{s}%
}{(n-2s)!s!(1-\rho_{23}^{2})^{2s}}\times\\
H_{n-2s}\left(  \frac{\allowbreak y(\frac{\rho_{12}-\rho_{13}\rho_{23}}%
{1-\rho_{23}^{2}})+z\left(  \frac{\rho_{13}-\rho_{12}\rho_{23}}{1-\rho
_{23}^{2}}\right)  }{\sqrt{\left(  (\frac{\rho_{12}-\rho_{13}\rho_{23}}%
{1-\rho_{23}^{2}}\right)  ^{2}+\left(  \frac{\rho_{13}-\rho_{12}\rho_{23}%
}{1-\rho_{23}^{2}}\right)  ^{2}}}\right)  .
\end{gather*}
Now we apply umbral identity (\ref{H(sum)}) and get
\begin{gather*}
E(H_{n}(X)|Y=y,Z=z)\\
=\sum_{s=0}^{\left\lfloor n/2\right\rfloor }\frac{n!a^{n-2s}}{2^{s}%
s!(n-2s)!}H_{n-2s}(m/a)\left(  \sigma^{2}+a^{2}-1\right)  ^{s}\\
=\sum_{s=0}^{\left\lfloor n/2\right\rfloor }\frac{n!\left(  \sigma^{2}%
+a^{2}-1\right)  ^{s}}{2^{s}s!(n-2s)!}\\
\times\sum_{k=0}^{n-2s}\binom{n-2s}{k}\left(  \frac{\rho_{12}-\rho_{13}%
\rho_{23}}{1-\rho_{23}^{2}}\right)  ^{k}\left(  \frac{\rho_{13}-\rho_{12}%
\rho_{23}}{1-\rho_{23}^{2}}\right)  ^{n-2s-k}H_{k}(y)H_{n-2s-k}(z)\\
=\frac{n!}{\left(  1-\rho_{23}^{2}\right)  ^{n}}\sum_{s=0}^{\left\lfloor
n/2\right\rfloor }\frac{(-1)^{s}\rho_{23}^{s}(\rho_{12}-\rho_{13}\rho
_{23})^{s}(\rho_{13}-\rho_{12}\rho_{23})^{s}}{s!(n-2s)!}\\
\times\sum_{k=0}^{n-2s}\binom{n-2s}{k}\left(  \rho_{12}-\rho_{13}\rho
_{23}\right)  ^{k}\left(  \rho_{13}-\rho_{12}\rho_{23}\right)  ^{n-2s-k}%
H_{k}(y)H_{n-2s-k}(z).
\end{gather*}

4. We have $\hat{C}_{k_{1},k_{2}}\allowbreak=\allowbreak C_{k_{1},k_{2}%
m}^{(2)}(\mathbf{D\Sigma}_{c}\mathbf{D})\prod_{j=1}^{2}\left(  \sqrt
{1-\rho_{j,3}^{2}}\right)  ^{k_{j}}\allowbreak=\allowbreak$\newline$%
\begin{cases}
0\text{,} & \text{if }k_{1}\neq k_{2}\text{;}\\
k_{1}!(\rho_{12}-\rho_{13}\rho_{14})^{k_{1}}\text{,} & \text{if }k_{1}%
=k_{2}\text{.}%
\end{cases}
$ This is so since
\[
\left(  \frac{\rho_{12}-\rho_{13}\rho_{14}}{\sqrt{(1-\rho_{13}^{2}%
)(1-\rho_{14}^{3})}}\right)  ^{k_{1}}\left(  1-\rho_{13}^{2}\right)
^{k_{1}/2}\left(  1-\rho_{14}^{2}\right)  ^{k_{1}/2}=(\rho_{12}-\rho_{13}%
\rho_{14})^{k_{1}}.
\]
Hence, following (\ref{E|z}) we get%
\begin{gather*}
h_{n,m}(z)=\sum_{j=0}^{n}\sum_{k=0}^{m}\binom{n}{j}\binom{m}{k}\hat{C}%
_{j,k}\rho_{13}^{n-j}\rho_{14}^{m-k}H_{n-j}(z)H_{m-k}(z)\\
=\sum_{j=0}^{\min(n,m)}\binom{n}{j}\binom{m}{j}j!(\rho_{12}-\rho_{13}\rho
_{14})^{j}\rho_{13}^{n-j}\rho_{14}^{m-j}H_{n-j}(z)H_{m-j}(z)\\
=\sum_{j=0}^{\min(n,m)}\binom{n}{j}\binom{m}{j}j!(\rho_{12}-\rho_{13}\rho
_{14})^{j}\rho_{13}^{n-j}\rho_{14}^{m-j}\\
\times\sum_{k=0}^{\min(n-j,m-j)}\binom{n-j}{k}\binom{m-j}{k}k!H_{n+m-2j-2k}%
(z)\\
=\sum_{s=0}^{\min(n,m)}\binom{n}{s}\binom{m}{s}s!\rho_{13}^{n-s}\rho
_{14}^{m-s}H_{n+m-2s}(z)\sum_{j=0}^{s}\binom{s}{j}(\rho_{12}-\rho_{13}%
\rho_{14})^{j}\left(  \rho_{13}\rho_{14}\right)  ^{s-j}\\
=\sum_{s=0}^{\min(n,m)}\binom{n}{s}\binom{m}{s}s!\rho_{12}^{s}\rho_{13}%
^{n-s}\rho_{14}^{m-s}H_{n+m-2s}(z).
\end{gather*}

\end{proof}

Having calculated $E(H_{n}(X)H_{m}(Y)|Z=z)$ we can easily calculate moments
\[
EH_{n}(X)H_{m}(Y)H_{n}(Z)=E\left(  H_{k}(Z)E\left(  H_{n}(X)H_{m}(Y)|Z\right)
\right)  ,
\]
of $N\left(
\begin{array}
[c]{c}%
0\\
0\\
0
\end{array}
,\left[
\begin{array}
[c]{ccc}%
1 & \rho_{12} & \rho_{13}\\
\rho_{12} & 1 & \rho_{23}\\
\rho_{13} & \rho_{23} & 1
\end{array}
\right]  \right)  $ distribution. We see that
\[
EH_{n}(X)H_{m}(Y)H_{n}(Z)=\sum_{j=0}^{\min(n,m)}\binom{n}{j}\binom{m}{j}%
j!\rho_{12}^{j}\rho_{13}^{n-j}\rho_{23}^{m-j}E\left(  H_{n+m-2j}%
(Z)H_{k}(Z)\right)  .
\]
Consequently, we get%
\[
EH_{n}(X)H_{m}(Y)H_{n}(Z)=\frac{m!n!k!}{(\frac{m+k-n}{2})!\left(  \frac
{n+m-k}{2}\right)  !\left(  \frac{n+k-m}{2}\right)  !}\rho_{12}^{\frac
{n+m-k}{2}}\rho_{13}^{\frac{n+k-m}{2}}\rho_{23}^{\frac{m+k-n}{2}}.
\]
Hence coefficient $c_{n,m,k}$ of the expansion (\ref{rozkl}) are equal to:%
\begin{equation}
c_{n,m,k}=\frac{1}{n!m!k!}EH_{n}(X)H_{m}(Y)H_{n}(Z)=\frac{\rho_{12}%
^{\frac{n+m-k}{2}}\rho_{13}^{\frac{n+k-m}{2}}\rho_{23}^{\frac{m+k-n}{2}}%
}{(\frac{m+k-n}{2})!\left(  \frac{n+m-k}{2}\right)  !\left(  \frac{n+k-m}%
{2}\right)  !}. \label{3H}%
\end{equation}
Let us denote for brevity $\mathbf{t}^{T}\allowbreak=\allowbreak(x,y,z),$
$\mathbf{\Sigma\allowbreak=\allowbreak}\left[
\begin{array}
[c]{ccc}%
1 & \rho_{12} & \rho_{13}\\
\rho_{12} & 1 & \rho_{23}\\
\rho_{13} & \rho_{23} & 1
\end{array}
\right]  .$ We have for all $x,y,z\in\mathbb{R}$
\begin{gather*}
\frac{1}{\sqrt{(2\pi)^{3}\det\mathbf{\Sigma}}}\exp\left(  -\frac
{\mathbf{t\Sigma}^{-1}\mathbf{t}}{2}\right)  =f_{N}(x)f_{N}\left(  y\right)
f_{N}(z)\\
\times\sum_{\substack{n,m,k\geq0 \\n+m+k\text{ is even}}}\frac{\rho
_{12}^{\frac{n+m-k}{2}}\rho_{13}^{\frac{n+k-m}{2}}\rho_{23}^{\frac{m+k-n}{2}}%
}{(\frac{m+k-n}{2})!\left(  \frac{n+m-k}{2}\right)  !\left(  \frac{n+k-m}%
{2}\right)  !}H_{n}(x)H_{m}(y)H_{k}(z).
\end{gather*}
Now notice that when denoting $p$\allowbreak=\allowbreak$(n+m-k)/2,$
$q\allowbreak=\allowbreak\left(  n+k-m\right)  /2,$ $r\allowbreak
=\allowbreak\left(  m+k-n\right)  /2,$ we get $p+q\allowbreak=\allowbreak n,$
$p+r\allowbreak=\allowbreak m,$ $q+r\allowbreak=\allowbreak k$ and we can
change the order of summation in the last sum getting%
\begin{align*}
&  \frac{1}{\sqrt{(2\pi)^{3}\det\mathbf{\Sigma}}}\exp\left(  -\frac
{\mathbf{t\Sigma}^{-1}\mathbf{t}}{2}\right) \\
&  =f_{N}(x)f_{N}\left(  y\right)  f_{N}(z)\sum_{p,q,r\geq0}\frac{\rho
_{12}^{p}\rho_{13}^{q}\rho_{23}^{r}}{p!q!r!}H_{p+q}(x)H_{p+r}(y)H_{q+r}(z),
\end{align*}
that is exactly the three-dimensional version of the Kibble-Slepian formula.

\begin{remark}
[Case $n=4$]\label{n-4}Let us calculate the quantity \newline$C_{n,m,k}%
(\mathbf{D\Sigma}_{c}\mathbf{D})\left(  \sqrt{1-\rho_{14}^{2}}\right)
^{n}\left(  \sqrt{1-\rho_{2,4}^{2}}\right)  ^{m}\left(  \sqrt{1-\rho_{3,4}%
^{2}}\right)  ^{k},$ with $\mathbf{\Sigma}_{c}\allowbreak=\allowbreak
\mathbf{\Sigma-\rho\rho}^{T},$ where $\mathbf{\rho}^{T}\allowbreak
=\allowbreak(\rho_{14},\rho_{24,}\rho_{34}),$ and $\mathbf{D}$ is the diagonal
matrix with $1/\sqrt{1-\rho_{i4}^{2}}$ on $i$ position of diagonal ,
$i\allowbreak=\allowbreak1,2,3$. Recall that this quantity is needed in order
to use equation (\ref{CMom}), for $n\allowbreak=\allowbreak4$. We have for
$n+m+k$ being even and $m+m\geq k$ and $n+k\geq m$ and $m+k\geq n$
\begin{gather}
\hat{C}_{n,m,k}=C_{n,m,k}(\mathbf{D\Sigma}_{c}\mathbf{D})\left(  \sqrt
{1-\rho_{14}^{2}}\right)  ^{n}\left(  \sqrt{1-\rho_{2,4}^{2}}\right)
^{m}\left(  \sqrt{1-\rho_{3,4}^{2}}\right)  ^{k}\nonumber\\
=\left(  \sqrt{1-\rho_{14}^{2}}\right)  ^{n}\left(  \sqrt{1-\rho_{2,4}^{2}%
}\right)  ^{m}\left(  \sqrt{1-\rho_{3,4}^{2}}\right)  ^{k}\frac{m!n!k!}%
{(\frac{m+k-n}{2})!\left(  \frac{n+m-k}{2}\right)  !\left(  \frac{n+k-m}%
{2}\right)  !}\nonumber\\
\times\left(  \frac{\rho_{12}-\rho_{14}\rho_{24}}{\sqrt{\left(  1-\rho
_{14}^{2}\right)  \left(  1-\rho_{24}^{2}\right)  }}\right)  ^{\frac{n+m-k}%
{2}}\left(  \frac{\rho_{13}-\rho_{14}\rho_{34}}{\sqrt{\left(  1-\rho_{14}%
^{2}\right)  \left(  1-\rho_{34}^{2}\right)  }}\right)  ^{\frac{n+k-m}{2}%
}\nonumber\\
\times\left(  \frac{\rho_{23}-\rho_{24}\rho_{34}}{\sqrt{\left(  1-\rho
_{24}^{2}\right)  \left(  1-\rho_{34}^{2}\right)  }}\right)  ^{\frac{m+k-n}%
{2}}\nonumber\\
=\frac{m!n!k!\left(  \rho_{12}-\rho_{14}\rho_{24}\right)  ^{\frac{n+m-k}{2}%
}\left(  \rho_{13}-\rho_{14}\rho_{34}\right)  ^{\frac{n+k-m}{2}}\left(
\rho_{23}-\rho_{24}\rho_{34}\right)  ^{\frac{m+k-n}{2}}}{(\frac{m+k-n}%
{2})!\left(  \frac{n+m-k}{2}\right)  !\left(  \frac{n+k-m}{2}\right)  !}
\label{pom2}%
\end{gather}

From (\ref{E|z}) we deduce that
\begin{align*}
h_{n,m,k}(\mathbf{\Sigma,}z)  &  =\sum_{s_{1}=0}^{n}\sum_{s_{2}=0}^{m}%
\sum_{s_{3}=0}^{k}\hat{C}_{s_{1},s_{2},s_{3}}\binom{n}{s_{1}}\binom{m}{s_{2}%
}\binom{k}{s_{3}}\\
&  \times\rho_{14}^{n-s_{1}}\rho_{24}^{m-s_{2}}\rho_{34}^{k-s_{3}}H_{n-s_{1}%
}(z)H_{m-s_{2}}(z)H_{k-s_{3}}(z),
\end{align*}
where we denoted (\ref{pom2}) by $\hat{C}_{n,m,k}\mathbf{\ }$for simplicity.
We can expect polynomials $h_{n,m,k}(\mathbf{\Sigma,}z)$ to be of the form%
\[
h_{n,m,k}(\mathbf{\Sigma,}z)=\sum_{j=0}^{n+m+k}\beta_{n+m+k-2j}(\mathbf{\Sigma
)}H_{n+m+k-2j}(z),
\]
where coefficients $\beta_{n+m+k-2j}(\mathbf{\Sigma)}$ are certain polynomials
in $6$ off diagonal elements of the matrix $\mathbf{\Sigma}$. These
polynomials are nontrivial, rather difficult to guess, as we know from the
$4-$dimensional form of the Kibble-Slepian formula.
\end{remark}

\begin{example}
With the help of Mathematica we can calculate for example
\begin{align*}
h_{1,1,1}(\mathbf{\Sigma},z)  &  =\rho_{14}\rho_{24}\rho_{34}H_{3}%
(z)+(\rho_{14}\rho_{23}+\rho_{13}\rho_{24}+\rho_{12}\rho_{34})H_{1}(z),\\
h_{2,1,1}(\mathbf{\Sigma},z)  &  =\rho_{14}^{2}\rho_{24}\rho_{34}H_{4}%
(z)+\rho_{14}(\rho_{14}\rho_{23}+2\rho_{12}\rho_{34}+2\rho_{13}\rho_{24}%
)H_{2}(z)+2\rho_{12}\rho_{13.}%
\end{align*}

\end{example}

\end{document}